\newcolumntype{L}{D{.}{.}{2,5}}
\theoremstyle{plain}
\newtheorem{thm}{Theorem}[section]
\newtheorem{lemma}{Lemma}[section]
\newtheorem{remark}{Remark}[section]
\newtheorem{example}{Example}[section]
\newtheorem{corollary}{Corollary}[section]
\newtheorem{proposition}{Proposition}[section]
\newtheorem{question}{Question}[section]
\theoremstyle{definition}
\begin{document}
	\title[On N\"orlund summability of Taylor series]{On N\"orlund summability of Taylor series in\\ weighted Dirichlet spaces}
	\author{{A. B\"erd\"ellima}$^*$ and {N. L. Braha}$^{\dagger}$}
	\thanks{$^*$ berdellima@gmail.com, $\dagger$ nbraha@gmail.com}

	\AtEndDocument{\bigskip{%
	  \textsc[$^*$]{Berlin 10709, Germany.} \par
	  \addvspace{\medskipamount}
	  \textsc[$\dagger$]{University of Prishtina,\\ Faculty of Natural Science and Mathematics, \\Department of Mathematics and Computer Sciences,\\ Av. Mother Teresa, Nr.5, 10000 Prishtina, Kosovo.} \par	}}

%
	\maketitle
	\begin{abstract}In this note we show that the Taylor series of a function in a weighted Dirichlet space is (generalized) N\"orlund summable, provided that the sequence determining the N\"orlund operator is non-decreasing and has finite upper growth rate. In particular the Taylor series is N\"orlund summable for all $\alpha>1/2$, and the rate of convergence is of the order $O(n^{-1/2})$. The inequality $\alpha>1/2$ is sharp. On the other hand if the Taylor series is N\"orlund summable and the partial sums of the determining sequence enjoy a certain growth condition then the determining sequence has finite lower growth rate. An analogue result is derived for a non-increasing sequence that is uniformly bounded away from zero.
	\end{abstract}

\section{Introduction}
	A topic of interest in complex analysis is the approximation of a holomorphic function by its Taylor series. It is known that for certain spaces of holomorphic functions defined on the unit disk $\mathbb D$, the Taylor series of a function $f$ converges in norm of the space to $f$ itself. Such results hold in particular for the (general) Hardy space $H^p$ for any $1<p<+\infty$, or the Dirichlet space $\mathcal D$, the space of all holomorphic functions $f$ defined on $\mathbb D$ for which the Dirichlet integral 
	\begin{equation}
	\label{eq:Dirichlet-integral}
	\mathcal D(f)=\int_{\mathbb D}|f'(z)|^2\,dA(z)
	\end{equation}
	is finite, where $dA$ is the normalized area measure on $\mathbb D$. On the other hand there are spaces of holomorphic functions where approximation in norm by Taylor series fails. Such examples include for instance the Hardy space $H^1$ or the disk algebra $A(\mathbb D)$ as witnessed by the classical example of du Bois-Raymond \cite{du-Bois}. However by a result of \cite[Fej\'er]{Fejer, Fejer2} if instead the Ces\`aro sum is considered then Ces\`aro sums of Taylor series of a function converge in norm to the function itself in both aforementioned spaces. Later \cite[Riesz]{Riesz} refined the result of Fej\'er for the generalized Ces\'aro sums for functions in $A(\mathbb D)$. Recently as an application of their work about Hadamard multipliers on weighted Dirichlet spaces \cite[Mashreghi et al.]{Mashregi-Ransford}, Mashreghi--Ransford consider generalized Ces\`aro summability of Taylor series for functions in a weighted Dirichlet space \cite{Mashregi-Ransford2}. A weighted Dirichlet space $\mathcal D_{\omega}$ is similar to the Dirichlet space $\mathcal D$, except for \eqref{eq:Dirichlet-integral} is replaced by 
		\begin{equation}
		\label{eq:Dirichlet-weighted}
		\mathcal D_{\omega}(f)=\int_{\mathbb D}|f'(z)|^2\,\omega(z)\,dA(z),
		\end{equation}
		where $\omega$ is a superharmonic function on $\mathbb D$. These spaces were studied by \cite[Aleman]{Aleman}, which are a generalization of the case where $\omega$ is a harmonic function. The latter were originally introduced by \cite[Richter]{Richter} and later studied by \cite[Richter--Sundberg]{Richter2}.
	 In view of these recent developments we consider a different summability method, that of (generalized) N\"orlund sums in $\mathcal D_{\omega}$.
	We show convergence results for the N\"orlund method and we also provide rates of convergence. Differently from Ces\`aro method, the growth rate of the sequence that determines N\"orlund's operator plays a pivotal role. In particular we prove that when this sequence is non-decreasing and enjoys finite upper growth rate, then the Taylor series of a holomorphic function in $\mathcal D_{\omega}$ is N\"orlund summable in the norm of $\mathcal D_{\omega}$ (Theorem \ref{th:convergence}(i)) and that this rate of convergence is of the order $O(n^{-1/2})$ (Theorem \ref{th:quantitative}). Moreover under a certain growth condition for partial sums of the determining sequence of N\"orlund operator, we get that this sequence has finite lower growth rate, whenever Taylor series are N\"orlund summable (Theorem \ref{th:convergence}(ii)). Like in the case of generalized Ces\`aro method, the lower bound $\alpha>1/2$, the parameter in the generalized N\"orlund method, is sharp (Theorem \ref{th:convergence}(ii)). An analogue result (Theorem \ref{th:non-increasing}) is derived for non-increasing sequences that are uniformly bounded away from zero.

\section{Preliminary results}
\subsection{N\"orlund sums}
Let $f(z)$ be a holomorphic function of the complex variable $z$ and denote by $f(z)=\displaystyle\sum_{k=0}^{\infty}a_kz^k$ its formal power series expansion. Let
	\begin{equation}
	\label{eq:taylor}
	s_k[f](z)=a_0+a_1z+a_2z^2+\cdots+a_kz^k,\quad k\in \mathbb N_0
	\end{equation} 
be the $k$-th degree Taylor polynomial of $f$. Given a sequence of non-negative numbers $(p_n)_{n\in\mathbb N_0}$ such that $P_n=p_0+p_1+\cdots+p_n>0$ for all $n\in \mathbb N_0$, the N\"orlund operator $(N,(p_n)_{n\in\mathbb N})$ acts on a sequence $x=(x_n)_{n\in\mathbb N_0}$ by the formula
\begin{equation}
\label{eq:norlund}
(Nx)_n:=\frac{1}{P_n}\sum_{k=0}^np_{n-k}x_k,\quad n\in\mathbb N_0.
\end{equation}
For more on N\"orlund method and other summability methods we refer to \cite[Boos]{Boos}.
We define N\"orlund sums of a holomorphic function $f$ as follows
\begin{equation}
\label{eq:norlund-sums}
N_n[f](z):=\frac{1}{P_n}\sum_{k=0}^np_{n-k}s_k[f](z),\quad n\in\mathbb N_0.
\end{equation}

\begin{lemma}
\label{l:equiv}
The following holds 
\begin{equation}
\label{eq:equiv}
N_n[f](z)=\frac{1}{P_n}\sum_{k=0}^nP_{n-k}a_kz^k,\quad n\in\mathbb N_0.
\end{equation}
\end{lemma}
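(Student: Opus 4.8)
The plan is to expand each Taylor polynomial inside the defining sum \eqref{eq:norlund-sums}, interchange the two summations, and recognize the resulting inner sum as a partial sum of the $(p_n)$ sequence. Concretely, I would start from the definition and use \eqref{eq:taylor} to write $s_k[f](z)=\sum_{j=0}^k a_j z^j$, obtaining
\[
N_n[f](z)=\frac{1}{P_n}\sum_{k=0}^n p_{n-k}\,s_k[f](z)=\frac{1}{P_n}\sum_{k=0}^n p_{n-k}\sum_{j=0}^k a_j z^j.
\]

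The key step is to exchange the order of summation. The index set is the triangle $\{(k,j):0\le j\le k\le n\}$, which can equally be described as $\{(k,j):0\le j\le n,\ j\le k\le n\}$. Since the sum is finite there are no convergence issues, and this reindexing gives
\[
\sum_{k=0}^n\sum_{j=0}^k p_{n-k}\,a_j z^j=\sum_{j=0}^n a_j z^j\sum_{k=j}^n p_{n-k}.
\]

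It then remains only to evaluate the inner sum. Substituting $m=n-k$ turns $\sum_{k=j}^n p_{n-k}$ into $\sum_{m=0}^{n-j} p_m$, which is exactly $P_{n-j}$ by the definition $P_n=\sum_{i=0}^n p_i$. Plugging this back and relabelling the summation variable $j$ as $k$ yields the claimed identity \eqref{eq:equiv}. I do not anticipate any genuine obstacle in this lemma: it is a routine interchange of finite sums, and the only point requiring minor care is fixing the limits of the reindexed inner sum correctly so that the partial sum $P_{n-k}$ emerges cleanly rather than an off-by-one variant.
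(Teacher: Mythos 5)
Your proof is correct and follows essentially the same route as the paper: expand each Taylor polynomial via \eqref{eq:taylor} and collect the coefficient of $a_jz^j$, which is $\sum_{k=j}^n p_{n-k}=P_{n-j}$. The paper merely performs the interchange of summation implicitly by writing out the collected terms, whereas you spell out the reindexing explicitly; the content is identical.
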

\begin{proof}
From \eqref{eq:norlund-sums} we have
\begin{align*}
N_n[f](z)=\frac{1}{P_n}\sum_{k=0}^n&p_{n-k}s_k[f](z)=\frac{1}{P_n}\sum_{k=0}^np_{n-k}\sum_{j=0}^ka_jz^j\\&
=\frac{1}{P_n}((P_na_0)+(P_{n-1}a_1z)+\cdots+(P_0a_nz^n))
=\frac{1}{P_n}\sum_{k=0}^nP_{n-k}a_kz^k.
\end{align*}
\end{proof}
In view of Lemma \ref{l:equiv} we define 
\begin{equation}
\label{eq:generalized-norlund}
N^{\alpha}_n[f](z):=\frac{1}{P_n^{\alpha}}\sum_{k=0}^nP^{\alpha}_{n-k}a_kz^k,\quad f(z)=\sum_{k=0}^{\infty}a_kz^k
\end{equation}
to be the generalized N\"orlund sum $N_n^{\alpha}[f]$ with parameter $\alpha>0$ of a function $f$. It is evident that $N^{\alpha}_n$ is a linear operator acting on the space of all holomorphic functions and its image is a certain subset of polynomials of degree $\leq n$ defined on the complex plane $\mathbb C$.

\subsection{Superharmonic functions and weighted Dirichlet spaces}
Let $U\subseteq\mathbb C$ be an open set. A function $\omega:U\to[-\infty,+\infty)$ is subharmonic on $U$ if it is upper semicontinuous and given $z\in U$ there exists $R>0$ such that 
\begin{equation}
\label{eq: subharmonic}
\omega(z)\leq \frac{1}{2\pi}\int^{2\pi}_0\omega(z+re^{i\theta})\,d\theta\quad(0\leq r<R).
\end{equation}
A function $\omega:U\to(-\infty,+\infty]$ is superharmonic on $U$ if $-\omega$ is subharmonic on $U$. In particular a harmonic function is both sub- and superharmonic, see e.g. \cite[Ransford \S 2.2]{Ransford}. Denote by $\mathbb D=\{z\in\mathbb C\,:\,|z|< 1\}$ the unit disk in $\mathbb C$, let $\omega>0$ be superharmonic, and $f$ holomorphic on $\mathbb D$. Define
\begin{equation}
\label{eq:dirichlet}
\mathcal D_{\omega}(f):=\int_{\mathbb D}|f'(z)|^2\omega(z)\,dA(z),
\end{equation}
where $dA$ denotes the normalized area measure on $\mathbb D$. The weighted Dirichlet space $\mathcal D_{\omega}$ is the set of all holomorphic functions $f$ on $\mathbb D$ such that $\mathcal D_{\omega}(f)<+\infty$. 
When equipped with the inner product
\begin{equation}
\label{eq:inner-product}
\langle f,g\rangle_{\mathcal D_{\omega}}:=f(0)\,\overline{g(0)}+\int_{\mathbb D}f'(z)\overline{g'(z)}\,\omega(z)\,dA(z),
\end{equation} 
$\mathcal D_{\omega}$ becomes a Hilbert space. This induces the norm
\begin{equation}
\label{eq:dirichlet-norm}
\|f\|^2_{\mathcal D_{\omega}}:=|f(0)|^2+\mathcal D_{\omega}(f),\quad f\in\mathcal D_{\omega}.
\end{equation} 
If $\omega$ is a positive superharmonic function, then there exists a unique positive finite Borel measure $\mu_{\omega}$ on $\overline{\mathbb D}$, such that for all $z\in\mathbb D$ it holds
\begin{equation}
\label{eq:Borel-measure}
\omega(z)=\int_{\mathbb D}\log\Big|\frac{1-\overline{\zeta}\,z}{\zeta-z}\Big|\,\frac{2}{1-|\zeta|^2}\,d\mu_{\omega}(\zeta)+\int_{\partial{\mathbb D}}\frac{1-|z|^2}{|\zeta-z|^2}\,d\mu_{\omega}(\zeta).
\end{equation}
When $\mu_{\omega}=\delta_{\zeta}$ (the Dirac mass at $\zeta$), we write $\mathcal D_{\zeta}$ for $\mathcal D_{\omega}$, see e.g. \cite[Theorem 4.5.1]{Ransford}. Note that one of the integrals above vanishes depending on $\zeta\in\mathbb D$ or $\zeta\in\partial\mathbb D$. From the properties of Dirac delta function $\delta_{\zeta}$ for a given holomorphic function $f$ we then have 
\begin{equation}
\label{eq:slice}
\mathcal D_{\zeta}(f)=\left\{\begin{array}{ccc}
\displaystyle\int_{\mathbb D}\log\Big|\frac{1-\overline{\zeta}\,z}{\zeta-z}\Big|\,\frac{2}{1-|\zeta|^2}\,|f'(z)|^2d A(z),&\zeta\in\mathbb D\\[1em]
\displaystyle\int_{\mathbb D}\frac{1-|z|^2}{|\zeta-z|^2}\,|f'(z)|^2\,dA(z),& \zeta\in\partial\mathbb D.
\end{array}\right.
\end{equation}
By virtue of Fubini's Theorem one can recover $\mathcal D_{\omega}(f)$ from $\mathcal D_{\zeta}(f)$ from the integral formula $\mathcal D_{\omega}(f)=\displaystyle\int_{\overline{\mathbb D}}\mathcal D_{\zeta}(f)\,d\mu_{\omega}(\zeta)$. Often $\mathcal D_{\zeta}$ is called a local Dirichlet space. If $H^2$ denotes the Hardy space, the space of holomorphic functions $f$ on $\mathbb D$ such that $\|f\|_{H^2}^2=\displaystyle\sum_{k=0}^{\infty}|a_k|^2<\infty$,
then we have the characterization due to \cite[Aleman \S 4.1]{Aleman}, see e.g. \cite[Theorem 2.1]{Mashregi-Ransford}:
\begin{lemma}
\label{l:Aleman} Let $\zeta\in\overline{\mathbb D}$, then $f\in\mathcal D_{\zeta}$ if and only if there exists $g\in H^2$ and $a\in\mathbb C$ such that $f(z)=a+(z-\zeta)\,g(z)$ for all $z\in\mathbb D$, and in this case $D_{\zeta}(f)=\|g\|_{H^2}$.
\end{lemma}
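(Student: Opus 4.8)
The plan is to treat the two cases of formula \eqref{eq:slice} separately, since the local Dirichlet integral has a genuinely different nature at interior points (a Green's-function weight) and at boundary points (a Poisson-kernel weight). In both cases I would reduce the identity $\mathcal D_\zeta(f)=\|g\|_{H^2}^2$ to an explicit computation in Taylor coefficients, and the very same computation yields the equivalence $f\in\mathcal D_\zeta\iff g\in H^2$ together with the existence of the representation $f=a+(z-\zeta)g$, so all three assertions come out at once. The starting algebraic observation is that, writing $f(z)=\sum_k a_kz^k$ and $g(z)=\sum_k b_kz^k$, the relation $f=a+(z-\zeta)g$ is equivalent to the coefficient recursion $a_0=a-\zeta b_0$ and $a_{k+1}=b_k-\zeta b_{k+1}$ for $k\ge0$; in particular, for a given $f$ the candidate $g$ (equivalently the sequence $(b_k)$) is uniquely determined, so there is nothing to choose and everything reduces to estimating a single integral.

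For the interior case $\zeta\in\mathbb D$ I would first dispatch $\zeta=0$, where the weight is simply $2\log(1/|z|)$, via the Littlewood--Paley identity $\int_{\mathbb D}2|f'(z)|^2\log(1/|z|)\,dA(z)=\sum_{k\ge1}|a_k|^2$, itself a one-line coefficient computation resting on $\int_0^1 r^{2k-1}\log(1/r)\,dr=1/(4k^2)$. Since $\zeta=0$ forces $g(z)=\sum_{k\ge1}a_kz^{k-1}$, this already gives $\mathcal D_0(f)=\|g\|_{H^2}^2$. For general $\zeta$ I would transport this by the disk automorphism $\psi(w)=\frac{w+\zeta}{1+\overline\zeta w}$ with $\psi(0)=\zeta$ and inverse $\varphi_\zeta(z)=\frac{z-\zeta}{1-\overline\zeta z}$: the change of variables $z=\psi(w)$, in which the factor $|\psi'|^2$ cancels against $dA(z)=|\psi'|^2\,dA(w)$ and in which $\log(1/|\varphi_\zeta(z)|)=\log\bigl|\tfrac{1-\overline\zeta z}{\zeta-z}\bigr|$, produces $\mathcal D_\zeta(f)=\frac{1}{1-|\zeta|^2}\,\mathcal D_0(f\circ\psi)$. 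Separately, relating the quotient $g_\psi$ of $f\circ\psi$ at $0$ to $g$ via $g_\psi(w)=\frac{1-|\zeta|^2}{1+\overline\zeta w}\,g(\psi(w))$ and recognizing $\frac{1-|\zeta|^2}{|1+\overline\zeta e^{i\theta}|^2}\,d\theta=dt$ as the arc-length element pulled back through $e^{it}=\psi(e^{i\theta})$ gives $\|g_\psi\|_{H^2}^2=(1-|\zeta|^2)\|g\|_{H^2}^2$. The two factors $1-|\zeta|^2$ cancel and leave $\mathcal D_\zeta(f)=\|g\|_{H^2}^2$.

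For the boundary case $\zeta\in\partial\mathbb D$ I would compute directly. Expanding the Poisson weight as $\frac{1-|z|^2}{|\zeta-z|^2}=\sum_{k\in\mathbb Z}r^{|k|}\overline\zeta^{\,k}e^{ik\phi}$ (with $z=re^{i\phi}$ and $\overline\zeta^{\,k}:=\zeta^{|k|}$ for $k<0$) and integrating $|f'|^2$ against it, the angular integration keeps only matched frequencies and the radial integral $\int_0^1 r^{2\max(m,n)+1}\,dr$ collapses the double sum to
\[
\mathcal D_\zeta(f)=\sum_{n,m\ge0}\frac{c_n\overline{c_m}\,\overline\zeta^{\,m-n}}{\max(m,n)+1},\qquad c_n=(n+1)a_{n+1}.
\]
Substituting $c_n=(n+1)(b_n-\zeta b_{n+1})$ from the recursion and using $\frac{(n+1)(m+1)}{\max(m,n)+1}=\min(m,n)+1=\sum_{j\ge0}\mathbf{1}[j\le n]\,\mathbf{1}[j\le m]$, the sum factorizes; setting $w_j=\zeta^j b_j$ the inner sums telescope, $\sum_{n\ge j}(w_n-w_{n+1})=w_j$, and one is left with $\mathcal D_\zeta(f)=\sum_{j\ge0}|w_j|^2=\sum_j|b_j|^2=\|g\|_{H^2}^2$.

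I expect the main obstacles to be twofold. In the interior case the delicate point is the bookkeeping of the two conformal factors: the Jacobian change of variables contributes a $\frac{1}{1-|\zeta|^2}$ and the transformation of $\|g\|_{H^2}$ under $\psi$ contributes a compensating $1-|\zeta|^2$, and one must check these cancel exactly rather than merely up to a constant (the instance $f(z)=z$, giving $g\equiv1$, is a useful sanity check). In the boundary case the analytic subtlety is justifying the term-by-term integration and the interchange of summation; I would handle this by working first with the dilates $f_\rho(z)=f(\rho z)$, $\rho<1$, where all series converge absolutely, and passing $\rho\to1$ by monotone convergence. The telescoping step then requires $w_n=\zeta^n b_n\to0$, which is valid precisely because $g\in H^2$ forces $b_n\to0$, so the finiteness of the integral and the membership $g\in H^2$ are genuinely equivalent.
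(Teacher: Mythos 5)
The paper does not prove this lemma at all: it is quoted with a citation to Aleman (see also Mashreghi--Ransford; for boundary points the result goes back to Richter--Sundberg), so your attempt has to be judged on its own merits rather than against a proof in the text. Your interior case is essentially sound: the Littlewood--Paley identity at $\zeta=0$, the transport $\mathcal D_\zeta(f)=\tfrac{1}{1-|\zeta|^2}\,\mathcal D_0(f\circ\psi)$, and the relation $g_\psi(w)=\tfrac{1-|\zeta|^2}{1+\overline\zeta w}\,g(\psi(w))$ are all correct, and the two factors of $1-|\zeta|^2$ cancel exactly as you say. (Two small caveats: the boundary change of variables giving $\|g_\psi\|^2_{H^2}=(1-|\zeta|^2)\|g\|^2_{H^2}$ presupposes $g\in H^2$, and to get the full ``if and only if'' you need the identity also when both sides are infinite, which requires the standard fact that $H^2$ is invariant under composition with disk automorphisms.) The formal computation in the boundary case --- Poisson-kernel expansion, $\tfrac{(n+1)(m+1)}{\max(m,n)+1}=\min(m,n)+1$, and the telescoping with $w_j=\zeta^j b_j$ --- is also correct, and is close in spirit to the proof in El-Fallah--Kellay--Mashreghi--Ransford's book. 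Note also that the identity should read $\mathcal D_\zeta(f)=\|g\|_{H^2}^2$, as you have it; the paper's statement drops the square.

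The genuine gap is in your plan for making the boundary case rigorous. ``Dilates plus monotone convergence'' does not work, for two separate reasons. First, monotone convergence does not apply on either side: $|f'(\rho z)|^2$ is not pointwise monotone in $\rho$, the double sum $\sum_{n,m}$ has terms of no fixed sign, and after substituting $u=\rho z$ the needed kernel comparison $\tfrac{1-|u/\rho|^2}{|\zeta-u/\rho|^2}\le C\,\tfrac{1-|u|^2}{|\zeta-u|^2}$ fails near $u=\rho\zeta$; in fact the convergence $\mathcal D_\zeta(f_\rho)\to\mathcal D_\zeta(f)$ is a nontrivial theorem that in the literature is \emph{deduced from} this very lemma, so invoking it here is circular. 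Second, in the direction ``$\mathcal D_\zeta(f)<\infty\Rightarrow$ representation exists'' you never produce the constant $a$: for $\zeta\in\partial\mathbb D$ (unlike the interior case, where $a=f(\zeta)$ is forced) every $a$ gives a holomorphic $g_a=(f-a)/(z-\zeta)$, and the whole content is that for \emph{some} $a$ the tails $t_j=\sum_{n\ge j}\zeta^n a_{n+1}$ converge with $\sum_j|t_j|^2<\infty$; your remark that ``there is nothing to choose'' begs exactly this question. The standard repair stays inside your coefficient framework but reorganizes it so that all terms are non-negative \emph{before} any interchange: write $\tfrac{1-r^2}{|\zeta-re^{i\phi}|^2}=(1-r^2)\,|1-r\overline\zeta e^{i\phi}|^{-2}$, absorb $(1-r\overline\zeta e^{i\phi})^{-1}$ into the analytic factor $f'$, and apply Parseval for each fixed $r$ to get $\tfrac{1}{2\pi}\int_0^{2\pi}|f'(re^{i\phi})|^2\tfrac{1-r^2}{|\zeta-re^{i\phi}|^2}\,d\phi=(1-r^2)\sum_{N\ge0}|S_N|^2r^{2N}$ with $S_N=\sum_{n=0}^N(n+1)\zeta^n a_{n+1}$; integrating in $r$ by monotone convergence then yields the unconditional identity $\mathcal D_\zeta(f)=\sum_{N\ge0}\tfrac{|S_N|^2}{(N+1)(N+2)}$, from which both implications follow at the sequence level by Abel summation and Hardy's inequality. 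Until the interchange and the existence of $a$ are handled by some such device, the boundary half of your proof is incomplete.
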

Given a holomorphic function $f\in\mathcal D_{\omega}$ we say that the Taylor series of $f$ is (N\"orlund) $(N,\alpha)$--summable, if $\|N^{\alpha}_n[f]-f\|_{\mathcal D_{\omega}}\to 0$ as $n\to+\infty$.

\subsection{Hadamard product}

Let $f(z)=\displaystyle\sum_{k=0}^{\infty}a_kz^k$ and $g(z)=\displaystyle\sum_{k=0}^{\infty}b_kz^k$ be two formal power series. The Hadamard product of $f$ and $g$ is defined as 
\begin{equation}
\label{eq:hadamard}
(f\ast g)(z):=\sum_{k=0}^{+\infty}a_kb_kz^k.
\end{equation}
It is clear that Hadamard product is commutative and associative operation. Moreover if $f$ or $g$ is a polynomial then $(f\ast g)$ is a polynomial too, and if both $f$ and $g$ are holomorphic then so is $f\ast g$. If $h$ is a power series with the property that $h\ast f\in\mathcal D_{\omega}$ whenever $f\in\mathcal D_{\omega}$, then $h$ is a Hadamard multiplier. 

\begin{proposition}
\label{p:hadamard-multiplier}
$h$ is a Hadamard multiplier for any positive superharmonic weight $\omega$, whenever $h$ is a polynomial. 
\end{proposition}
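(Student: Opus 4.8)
The plan is to reduce the statement to the single fact that every polynomial belongs to $\mathcal{D}_\omega$. Indeed, if $h(z)=\sum_{k=0}^N c_k z^k$ is a polynomial and $f(z)=\sum_{k=0}^\infty a_k z^k$ is an arbitrary element of $\mathcal{D}_\omega$, then by the definition \eqref{eq:hadamard} the Hadamard product $(h\ast f)(z)=\sum_{k=0}^N c_k a_k z^k$ is again a polynomial, of degree at most $N$. Consequently $h$ is a Hadamard multiplier as soon as we know that $\mathcal{D}_\omega$ contains all polynomials, since $h\ast f$ will then automatically lie in $\mathcal{D}_\omega$. So the whole problem collapses to proving $\mathcal{D}_\omega(p)<\infty$ for an arbitrary polynomial $p$, the point being that multiplying by a polynomial truncates $f$ and removes all questions about its tail.

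To estimate $\mathcal{D}_\omega(p)$ I would decompose it over the local Dirichlet spaces via the Fubini identity $\mathcal{D}_\omega(p)=\int_{\overline{\mathbb{D}}}\mathcal{D}_\zeta(p)\,d\mu_\omega(\zeta)$ and compute each slice $\mathcal{D}_\zeta(p)$ through Lemma \ref{l:Aleman}. For a fixed $\zeta\in\overline{\mathbb{D}}$, polynomial division gives $p(z)=p(\zeta)+(z-\zeta)\,q_\zeta(z)$, where $q_\zeta(z)=\bigl(p(z)-p(\zeta)\bigr)/(z-\zeta)$ is itself a polynomial and hence lies in $H^2$. Taking $a=p(\zeta)$ and $g=q_\zeta$ in Lemma \ref{l:Aleman} shows $p\in\mathcal{D}_\zeta$ with $\mathcal{D}_\zeta(p)=\|q_\zeta\|_{H^2}$ for every $\zeta$.

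The remaining step, and the only place where a little care is needed, is to control $\|q_\zeta\|_{H^2}$ uniformly in $\zeta$. Writing $p(z)=\sum_{k=0}^N c_k z^k$ and expanding $q_\zeta$ through the identity $\tfrac{z^k-\zeta^k}{z-\zeta}=\sum_{j=0}^{k-1} z^j\zeta^{\,k-1-j}$, one sees that the coefficient of $z^j$ in $q_\zeta$ is the polynomial $\sum_{k>j} c_k\,\zeta^{\,k-1-j}$ in the variable $\zeta$. Since $|\zeta|\le 1$ on $\overline{\mathbb{D}}$, each such coefficient is bounded by a constant depending only on the $c_k$, so that $\sup_{\zeta\in\overline{\mathbb{D}}}\|q_\zeta\|_{H^2}=:C(p)<\infty$. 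Because $\mu_\omega$ is a positive \emph{finite} Borel measure on $\overline{\mathbb{D}}$, integrating the slice identity yields $\mathcal{D}_\omega(p)\le C(p)\,\mu_\omega(\overline{\mathbb{D}})<\infty$, whence $\|p\|_{\mathcal{D}_\omega}<\infty$ by \eqref{eq:dirichlet-norm}. This shows that all polynomials belong to $\mathcal{D}_\omega$ and, combined with the first paragraph, finishes the proof.

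I expect the main (indeed essentially the only) obstacle to be establishing the uniform bound $\sup_\zeta\|q_\zeta\|_{H^2}<\infty$; everything else is bookkeeping, and it is the finiteness of $\mu_\omega$ together with the polynomial truncation of $h\ast f$ that ultimately guarantees integrability. As a consistency check, applying the same mechanism to $p(z)=z$ gives $q_\zeta\equiv 1$ and recovers $\mathcal{D}_\omega(z)=\mu_\omega(\overline{\mathbb{D}})$, i.e. the finiteness of $\int_{\mathbb{D}}\omega\,dA$, which is exactly what one would verify by the more direct estimate $\mathcal{D}_\omega(p)\le\bigl(\sup_{\overline{\mathbb{D}}}|p'|^2\bigr)\int_{\mathbb{D}}\omega\,dA$.
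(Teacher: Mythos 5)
Your proof is correct, but it follows a genuinely different route from the paper's. The two arguments agree on the reduction: $h\ast f$ is a polynomial, so the whole issue is whether polynomials lie in $\mathcal{D}_\omega$. From there the paper argues in one line: for a polynomial $p$ the derivative $p'$ is bounded on $\mathbb{D}$, hence $\mathcal{D}_\omega(p)\le\bigl(\sup_{\mathbb{D}}|p'|^2\bigr)\int_{\mathbb{D}}\omega\,dA$, and the only substantive input is that a positive superharmonic function on $\mathbb{D}$ is area-integrable, citing \cite[Theorem 2.5.1]{Ransford} --- exactly the ``more direct estimate'' you acknowledge in your closing sentence. You instead route the finiteness through the local Dirichlet spaces: the slice formula $\mathcal{D}_\omega(p)=\int_{\overline{\mathbb{D}}}\mathcal{D}_\zeta(p)\,d\mu_\omega(\zeta)$, Lemma~\ref{l:Aleman} applied to the divided difference $q_\zeta(z)=(p(z)-p(\zeta))/(z-\zeta)$, the uniform bound $\sup_{\zeta\in\overline{\mathbb{D}}}\|q_\zeta\|_{H^2}<\infty$, and the finiteness of the representing measure $\mu_\omega$ from \eqref{eq:Borel-measure}. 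This is heavier machinery, but your execution of it is sound (the divided-difference expansion and the uniform coefficient bound are right), and it has the virtue of staying entirely within facts already stated in the paper rather than invoking the external integrability theorem. Your consistency check with $p(z)=z$ in fact reveals that the two approaches rest on the same underlying fact, since $\int_{\mathbb{D}}\omega\,dA=\mathcal{D}_\omega(z)=\mu_\omega(\overline{\mathbb{D}})$: integrability of $\omega$ and finiteness of $\mu_\omega$ are the same statement. (One cosmetic point: with the intended normalization of Lemma~\ref{l:Aleman} one has $\mathcal{D}_\zeta(p)=\|q_\zeta\|_{H^2}^2$ rather than $\|q_\zeta\|_{H^2}$; the square is immaterial for finiteness.)
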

\begin{proof}
Let $h=b_0+b_1z+\cdots+b_nz^n$ be a polynomial then so is $h\ast f$ and in particular $h\ast f$ is holomorphic on $\mathbb D$. It only remains to show that $\mathcal D_{\omega}(h\ast f)<+\infty$. Note that by definition of $\mathcal D_{\omega}$ we have
\begin{align*}
\mathcal D_{\omega}(h\ast f)&=\int_{\mathbb D}|(h\ast f)'(z)|^2\,\omega(z)\,dA(z)
=\int_{\mathbb D}|\sum_{k=1}^na_kb_kz^k|^2\,\omega(z)\,dA(z)\\&\leq \int_{\mathbb D}(\sum_{k=1}^n|a_k||b_k||z|^k)^2\omega(z)\,dA(z)\leq C_n\int_{\mathbb D}\omega(z)\,dA(z)
\end{align*}
where $C_n:=(\displaystyle\sum_{k=1}^n|a_k||b_k|)^2$ is a certain positive constant depending on $n\in\mathbb N_0$. From the general theory of harmonic functions, e.g. see \cite[Theorem 2.5.1 \S 2.5]{Ransford}, if $\omega$ is positive and superharmonic on $\mathbb D$ then $\omega$ is integrable on $\mathbb D$. Consequently $\mathcal D_{\omega}(h\ast f)<+\infty$.
\end{proof}
 Given $n\in\mathbb N$ define
\begin{equation}
\label{eq:hn}
h^{\alpha}_n(z)=\frac{1}{P_n^{\alpha}}\sum_{k=0}^{n}P_{n-k}^{\alpha}z^k.
\end{equation} 
In view of Hadamard product we can then write $N^{\alpha}_n[f](z)=(h^{\alpha}_n\ast f)(z)$ for every $n\in\mathbb N_0$.
\begin{remark}
Since $h^{\alpha}_n$ in \eqref{eq:hn} is a polynomial for every $n\in\mathbb N_0$ then $h^{\alpha}_n$ is a Hadamard multiplier for any positive superharmonic weight $\omega$.
\end{remark}

Define the matrix $T^{\alpha}_{n}$ as 
\begin{equation}
\label{eq:matrix-T}
T^{\alpha}_{n}:=\begin{pmatrix}
c_1&(c_2-c_1)& (c_3-c_2)&\cdots \\
0&c_2&(c_3-c_2)&\cdots\\
0&0 & c_3&\cdots\\
\cdots&\cdots&\cdots&\cdots\\
\end{pmatrix},\quad \text{where}\;c_k:= \left\{
\begin{array}{ll}
     \displaystyle\Big(\frac{P_{n-k}}{P_n}\Big)^{\alpha} &  1\leq k\leq n\\[1em]
     0 & k>n.
\end{array} 
\right.
\end{equation}

Since $h^{\alpha}_n$ is a polynomial for each $n\in\mathbb N_0$, then by \cite[Theorem 2.1]{Mashregi-Ransford2} we have:
\begin{corollary}
\label{c:N-T}
For every superharmonic weight $\omega$ it holds that $\|N^{\alpha}_n\|_{\mathcal D_{\omega}\to\mathcal D_{\omega}}\leq \|T^{\alpha}_{n}\|_{\ell^2\to\ell^2}$, with equality if $\omega(z)=(1-|z|^2)/|1-z|^2$.
\end{corollary}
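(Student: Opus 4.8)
The plan is to read the statement as the specialization to $h^{\alpha}_n$ of the general Hadamard-multiplier bound of Mashreghi--Ransford, and to supply the one computation that is specific to the Nörlund setting, namely the identification of the controlling matrix with $T^{\alpha}_n$. First I would record what is already in hand: since $N^{\alpha}_n[f]=h^{\alpha}_n\ast f$ and $h^{\alpha}_n$ is a polynomial (the Remark after \eqref{eq:hn}), the operator $N^{\alpha}_n=M_{h^{\alpha}_n}$ is a polynomial Hadamard multiplier, so \cite[Theorem 2.1]{Mashregi-Ransford2} applies directly and reduces the estimate to controlling the $\ell^2\to\ell^2$ norm of the matrix that this multiplier induces on the $H^2$-symbols of the local Dirichlet spaces.

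Second, I would pass to the local spaces via the disintegration $\mathcal D_{\omega}(f)=\int_{\overline{\mathbb D}}\mathcal D_{\zeta}(f)\,d\mu_{\omega}(\zeta)$ and analyse a single $\mathcal D_{\zeta}$ with $\zeta=1$, since $\omega(z)=(1-|z|^2)/|1-z|^2$ is precisely the weight with $\mu_{\omega}=\delta_1$. By Lemma \ref{l:Aleman} I write $f=a+(z-1)g$ with $g=\sum_k b_kz^k\in H^2$ and $\mathcal D_1(f)=\|g\|_{H^2}^2=\sum_k|b_k|^2$; expanding $(z-1)g$ forces $a_k=b_{k-1}-b_k$ for $k\ge1$. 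Applying $M_{h^{\alpha}_n}$ replaces $a_k$ by $c_ka_k$ with $c_k=(P_{n-k}/P_n)^{\alpha}$, so the symbol $\tilde g=\sum_k\tilde b_kz^k$ of the image satisfies $\tilde b_{k-1}-\tilde b_k=c_k(b_{k-1}-b_k)$ with $\tilde b_k\to0$, whence $\tilde b_m=\sum_{k>m}c_k(b_{k-1}-b_k)$. A short reindexing of this telescoping sum shows that $(b_k)\mapsto(\tilde b_k)$ is implemented on $\ell^2$ by exactly the matrix \eqref{eq:matrix-T}: diagonal entries $c_k$, column-$j$ super-diagonal entries $c_j-c_{j-1}$, and the truncation $c_k=0$ for $k>n$ accounting for the last non-trivial column. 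Hence $\mathcal D_1(N^{\alpha}_n[f])=\|T^{\alpha}_nb\|_{\ell^2}^2$, giving $\mathcal D_1(N^{\alpha}_n[f])\le\|T^{\alpha}_n\|^2\,\mathcal D_1(f)$ with the constant attained.

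Third, I would promote this to a general weight. Since $M_{h^{\alpha}_n}$ commutes with rotations and a rotation carries $\mathcal D_{\zeta}$ isometrically onto $\mathcal D_{e^{i\theta}\zeta}$, the local Dirichlet seminorm of $M_{h^{\alpha}_n}$ on $\mathcal D_{\zeta}$ is the same for every $\zeta\in\partial\mathbb D$ and equals $\|T^{\alpha}_n\|$. Feeding the pointwise bounds $\mathcal D_{\zeta}(N^{\alpha}_n[f])\le\|T^{\alpha}_n\|^2\mathcal D_{\zeta}(f)$ into the disintegration, and noting that $N^{\alpha}_n$ fixes the constant term ($c_0=1$) so the comparison reduces to the Dirichlet functional, yields $\|N^{\alpha}_n\|_{\mathcal D_{\omega}\to\mathcal D_{\omega}}\le\|T^{\alpha}_n\|$, while the choice $\mu_{\omega}=\delta_1$ saturates it.

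I expect the one genuinely delicate point to be the extremality underlying the passage to arbitrary $\omega$: one must know that no local space, in particular none attached to an interior point $\zeta\in\mathbb D$ where $\mathcal D_{\zeta}$ carries the logarithmic kernel rather than the Poisson kernel, produces a transfer operator of norm exceeding $\|T^{\alpha}_n\|$. Establishing this maximality of the boundary case is the technical core of \cite[Theorem 2.1]{Mashregi-Ransford2}, so in the present corollary it is invoked rather than reproved, and the only original computation required is the matrix identification carried out above.
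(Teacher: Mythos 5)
Your proof is correct and takes essentially the same route as the paper: the paper's entire proof of Corollary \ref{c:N-T} consists of observing that $N^{\alpha}_n=M_{h^{\alpha}_n}$ with $h^{\alpha}_n$ a polynomial and then citing \cite[Theorem 2.1]{Mashregi-Ransford2}, which is exactly your reduction. The extra material you supply --- the identification, via Lemma \ref{l:Aleman} at $\zeta=1$, of the induced operator on $H^2$-symbols with the matrix $T^{\alpha}_n$ of \eqref{eq:matrix-T}, and the deferral to Mashreghi--Ransford of the maximality of the boundary weight over all superharmonic $\omega$ --- is precisely the internal content of the cited theorem, so it adds expository detail rather than a different argument.
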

As a consequence of \cite[Theorem 2.2 (i)]{Mashregi-Ransford2} we have the lemma:
\begin{lemma}
\label{l:bound-T}
For each $n\in\mathbb N$ it holds that 
\begin{equation}
\label{eq:bound-T}
\|T_n^{\alpha}\|^2_{\ell^2\to\ell^2}\leq (n+1)\sum_{k=1}^n\Big|\Big(\frac{P_{n-k}}{P_n}\Big)^{\alpha}-\Big(\frac{P_{n-k-1}}{P_n}\Big)^{\alpha}\Big|^2.
\end{equation}
\end{lemma}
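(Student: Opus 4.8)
The plan is to avoid the crude estimates (both the Hilbert--Schmidt norm and the triangle inequality applied to the rank-one column pieces of $T_n^{\alpha}$ overshoot) and instead exploit an exact telescoping hidden in the column structure of $T_n^{\alpha}$. Write $c_k=(P_{n-k}/P_n)^{\alpha}$ for $1\le k\le n$ and $c_k=0$ for $k>n$, and adopt the convention $P_{-1}=0$, so that $c_{n+1}=0$. Reading off \eqref{eq:matrix-T}, the entries are $(T_n^{\alpha})_{ii}=c_i$ and $(T_n^{\alpha})_{ij}=d_j:=c_j-c_{j-1}$ for $j>i$, where $d_j=0$ once $j>n+1$. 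Hence for $x=(x_k)\in\ell^2$ and $1\le i\le n$,
\[
(T_n^{\alpha}x)_i=c_ix_i+\sum_{j=i+1}^{n+1}d_j x_j ,
\]
while the rows $i>n$ vanish. First I would use the telescoping identity $c_i=-\sum_{j=i+1}^{n+1}d_j$ (immediate from $\sum_{j=i+1}^{n+1}(c_j-c_{j-1})=c_{n+1}-c_i$) to absorb the diagonal term, obtaining the clean expression
\[
(T_n^{\alpha}x)_i=\sum_{j=i+1}^{n+1}d_j\,(x_j-x_i).
\]

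Next I would estimate. Applying Cauchy--Schwarz to the inner sum and then bounding $\sum_{j=i+1}^{n+1}|d_j|^2$ by the full sum $D:=\sum_{j=2}^{n+1}|d_j|^2$ gives
\[
\|T_n^{\alpha}x\|^2=\sum_{i=1}^{n}\Big|\sum_{j=i+1}^{n+1}d_j(x_j-x_i)\Big|^2\le D\sum_{i=1}^{n}\sum_{j=i+1}^{n+1}|x_j-x_i|^2 .
\]
The double sum runs over all pairs $1\le i<j\le n+1$, so the elementary identity $\sum_{1\le i<j\le n+1}|x_i-x_j|^2=(n+1)\sum_{i=1}^{n+1}|x_i|^2-\big|\sum_{i=1}^{n+1}x_i\big|^2$ bounds it by $(n+1)\|x\|^2$. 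This yields $\|T_n^{\alpha}\|_{\ell^2\to\ell^2}^2\le (n+1)D$, and it remains only to recognise $D$ as the right-hand side of \eqref{eq:bound-T}: since $c_{n+1}=0$, reindexing $k=j-1$ gives $D=\sum_{k=1}^{n}|c_k-c_{k+1}|^2=\sum_{k=1}^{n}\big|(P_{n-k}/P_n)^{\alpha}-(P_{n-k-1}/P_n)^{\alpha}\big|^2$.

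The main obstacle is the first step. The naive bounds retain the diagonal contribution $\sum_i c_i^2$, which is not controlled by the difference sum $D$ and overestimates $\|T_n^{\alpha}\|^2$ by a factor growing with $n$; the gain comes entirely from recognising that $c_i$ is exactly minus the tail sum $\sum_{j>i}d_j$, which converts the matrix action into the oscillation form $\sum_{j>i}d_j(x_j-x_i)$ and lets the pairwise-difference identity supply precisely the dimensional factor $(n+1)$. This is the mechanism underlying \cite[Theorem 2.2(i)]{Mashregi-Ransford2}, from which the estimate also follows upon specialising the coefficient sequence to $c_k=(P_{n-k}/P_n)^{\alpha}$.
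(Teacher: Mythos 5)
Your proof is correct, but it takes a genuinely different route from the paper, because the paper offers no argument of its own: Lemma \ref{l:bound-T} is stated there as an immediate consequence of \cite[Theorem 2.2 (i)]{Mashregi-Ransford2}, applied to the matrix $T_n^{\alpha}$ of \eqref{eq:matrix-T}. You instead reconstruct the underlying mechanism and make the lemma self-contained. Each of your steps checks out: (a) the telescoping identity $c_i=-\sum_{j=i+1}^{n+1}d_j$ (valid because $c_{n+1}=0$) correctly converts each nonzero row into the oscillation form $\sum_{j=i+1}^{n+1}d_j(x_j-x_i)$, and the rows $i>n$ indeed vanish; (b) Cauchy--Schwarz together with the identity $\sum_{1\le i<j\le n+1}|x_i-x_j|^2=(n+1)\sum_{i=1}^{n+1}|x_i|^2-\big|\sum_{i=1}^{n+1}x_i\big|^2$ (which does hold for complex entries) produces exactly the dimensional factor $(n+1)$; (c) the reindexing that identifies $D$ with the sum in \eqref{eq:bound-T} is exact, provided one adopts $P_{-1}=0$ --- the natural convention which the paper leaves implicit but also relies on, since its $k=n$ term involves $P_{-1}$ and the Mean Value Theorem step in Theorem \ref{th:convergence} uses intervals $(P_{n-k-1},P_{n-k})$ down to $k=n$. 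What your route buys is a self-contained proof that explains structurally why only the differences $|c_{k+1}-c_k|$ enter the bound and why no diagonal contribution $\sum_i c_i^2$ survives (which, as you note, any Hilbert--Schmidt or column-splitting estimate would retain, losing the lemma); what the paper's citation buys is brevity, at the cost of asking the reader to verify that $T_n^{\alpha}$ fits the hypotheses of the quoted theorem.
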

\subsection{Growth rate}
The upper growth rate $\rho$ of the sequence $(p_n)_{n\in\mathbb N_0}$ is defined as 
\begin{equation}
\label{eq:lower}
\rho:=\limsup_{n\to+\infty}\rho_n,\quad\rho_n:=\frac{n\,p_n}{P_n}.
\end{equation}
If $\rho<+\infty$ we say that the sequence $(p_n)_{n\in\mathbb N_0}$ has finite upper growth rate. Similarly one can define the lower growth rate if limsup is replaced by liminf. Evidently a sequence with finite upper growth rate has finite lower growth rate. For instance the sequence $p_n=n^k$ for any $k\in\mathbb N$ has finite growth rate. An example of a sequence with unbounded growth rate is $p_n=r^n$ for any $r>1$ or the sequence $p_n=\ln(n+1)$ for $n\in\mathbb N$. Note that the quantity $\rho_n$ appears in \cite{Borwein} and plays a crucial role in determining the $\ell_p$ norm of the N\"orlund operator. 
The following lemma is necessary for our main results; it ties together the importance of growth rate of the determining sequence $(p_n)_{n\in\mathbb N_0}$ with convergence of N\"orlund method.
\begin{lemma}
\label{l:polynomials} Let $(p_n)_{n\in\mathbb N_0}$ have finite upper growth rate, then for any polynomial $f$ it holds that $\|N^{\alpha}_n[f]-f\|_{\mathcal D_{\omega}}\to 0$ as $n\to+\infty$. 
\end{lemma}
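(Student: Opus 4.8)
The plan is to reduce the statement to the elementary fact that, for each fixed $k$, the ratio $P_{n-k}/P_n$ tends to $1$ as $n\to+\infty$, and then to bound the error directly in the $\mathcal D_\omega$-norm through its finitely many Taylor coefficients, exactly as in the proof of Proposition \ref{p:hadamard-multiplier}. First I would fix a polynomial $f(z)=\sum_{k=0}^m a_k z^k$ of degree $m$. For every $n\ge m$ the definition \eqref{eq:generalized-norlund} yields
\[
N_n^{\alpha}[f](z)-f(z)=\sum_{k=0}^m\Big[\Big(\tfrac{P_{n-k}}{P_n}\Big)^{\alpha}-1\Big]a_k z^k,
\]
so the error is again a polynomial of degree at most $m$, whose $k$-th coefficient is $c_{n,k}a_k$ with $c_{n,k}:=(P_{n-k}/P_n)^{\alpha}-1$. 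In particular $c_{n,0}=0$, hence the constant term vanishes and the $|f(0)|^2$ contribution to the norm \eqref{eq:dirichlet-norm} is identically zero.

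The crux of the argument is to show $c_{n,k}\to 0$ for each fixed $k$, and by continuity of $x\mapsto x^{\alpha}$ it suffices to prove $P_{n-k}/P_n\to 1$. This is where the finite upper growth rate enters. Writing $P_n-P_{n-k}=\sum_{j=n-k+1}^n p_j$ and using that $(P_n)$ is non-decreasing (since each $p_n\ge 0$) together with the bound $p_j\le(\rho+\varepsilon)P_j/j\le(\rho+\varepsilon)P_n/j$, valid for all sufficiently large $j$ by \eqref{eq:lower}, I obtain
\[
0\le\frac{P_n-P_{n-k}}{P_n}=\sum_{j=n-k+1}^n\frac{p_j}{P_n}\le\sum_{j=n-k+1}^n\frac{\rho+\varepsilon}{j}\le\frac{k(\rho+\varepsilon)}{n-k+1}\xrightarrow[n\to+\infty]{}0 .
\]
I expect this estimate to be the main (though still elementary) obstacle; it is also the only place the hypothesis is used, and it genuinely fails without it, e.g. for $p_n=r^n$ with $r>1$ one has $P_{n-k}/P_n\to r^{-k}\ne 1$.

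It remains to pass from coefficientwise convergence to norm convergence. Differentiating gives $(N_n^{\alpha}[f]-f)'(z)=\sum_{k=1}^m k\,c_{n,k}a_k z^{k-1}$, so that $|(N_n^{\alpha}[f]-f)'(z)|\le\sum_{k=1}^m k\,|c_{n,k}|\,|a_k|=:C_n$ for every $z\in\mathbb D$, and therefore
\[
\mathcal D_{\omega}\big(N_n^{\alpha}[f]-f\big)\le C_n^2\int_{\mathbb D}\omega(z)\,dA(z).
\]
The integral is finite because a positive superharmonic weight is integrable on $\mathbb D$ (the same fact invoked in Proposition \ref{p:hadamard-multiplier}), while $C_n\to 0$ as a finite sum of terms each tending to $0$ by the previous paragraph. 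Combined with the vanishing of the constant term, this gives $\|N_n^{\alpha}[f]-f\|^2_{\mathcal D_{\omega}}\to 0$, which is the claim.
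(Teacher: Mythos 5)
Your proof is correct, and its overall skeleton matches the paper's: both arguments reduce the lemma to the claim that $P_{n-k}/P_n\to 1$ for each fixed $k$, and both then convert coefficientwise convergence into convergence in $\mathcal D_{\omega}$ by bounding the derivative of the error polynomial uniformly on $\mathbb D$ and invoking the integrability of a positive superharmonic weight, exactly as in Proposition \ref{p:hadamard-multiplier}. Where you genuinely differ is in how the ratio claim is established. The paper proceeds by strong induction on $k$: it first shows $P_{n-1}/P_n\to 1$ through $\limsup$/$\liminf$ manipulations of $1-p_n/P_n$, then factors $P_{n-m}/P_n=(P_{n-m}/P_{n-m+1})\cdot(P_{n-m+1}/P_n)$ and repeats the base-case argument on the first factor. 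You instead telescope, $P_n-P_{n-k}=\sum_{j=n-k+1}^n p_j$, and feed in the defining bound $p_j\le(\rho+\varepsilon)P_j/j\le(\rho+\varepsilon)P_n/j$ (valid for large $j$, using monotonicity of $(P_n)$) to get $0\le 1-P_{n-k}/P_n\le k(\rho+\varepsilon)/(n-k+1)$. This buys two things: it is more elementary, avoiding the induction and the somewhat informal splitting of limits inferior/superior of products that the paper relies on; and it is quantitative, giving a rate $O(k/n)$ for the coefficient decay, which is in the spirit of what Theorem \ref{th:quantitative} later needs. A further small point in your favor: you note explicitly that $c_{n,0}=0$, so the $|f(0)|^2$ term in the norm \eqref{eq:dirichlet-norm} vanishes and the norm reduces to the Dirichlet integral; the paper identifies $\|N^{\alpha}_n[f]-f\|_{\mathcal D_{\omega}}$ with that integral without comment, so your bookkeeping tidies up a minor imprecision.
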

\begin{proof}
Let $f(z)=a_0+a_1z+\cdots +a_mz^m$ for some $m\in\mathbb N_0$, then 
$$N^{\alpha}_n[f](z)=\frac{1}{P_n^{\alpha}}\sum_{k=0}^m P_{n-k}^{\alpha}a_kz^k$$
implies
\begin{align*}
\|N^{\alpha}_n[f]-f\|_{\mathcal D_{\omega}}&=\int_{\mathbb D}|N_n^{\alpha}[f]'(z)-f'(z)|^2\,\omega(z)\,dA(z)\\&=\int_{\mathbb D}|\sum_{k=1}^m\Big(\Big(\frac{P_{n-k}}{P_n}\Big)^{\alpha}-1\Big)ka_kz^{k-1}|^2\,\omega(z)\,dA(z)\leq C^{\alpha}_{m,n}\int_{\mathbb D}\omega(z)\,dA(z)
\end{align*}
where 
\begin{equation}
\label{eq:Cmn}
C^{\alpha}_{m,n}=\Big(\sum_{k=1}^m\Big(\Big(\frac{P_{n-k}}{P_n}\Big)^{\alpha}-1\Big)k|a_k|\Big)^2.
\end{equation}
We claim that $\lim_{n\to+\infty}P_{n-k}/P_n=1$ for every $k\in\{1,2,\cdots,m\}$. We proceed by strong induction. For $k=1$ we have
$$\limsup_{n\to+\infty}\frac{P_{n-1}}{P_n}=1-\liminf_{n\to+\infty}\frac{p_n}{P_n}=1-\lim_{n\to+\infty}\frac{1}{n}\cdot\liminf_{n\to+\infty}\frac{np_n}{P_n}=1-0\cdot \liminf_{n\to+\infty}\frac{np_n}{P_n}=1.$$
Note that finite upper growth rate implies finite lower growth rate and therefore the last equality is justified. Similarly 
$$\liminf_{n\to+\infty}\frac{P_{n-1}}{P_n}=1-\limsup_{n\to+\infty}\frac{p_n}{P_n}=1-\lim_{n\to+\infty}\frac{1}{n}\cdot\limsup_{n\to+\infty}\frac{np_n}{P_n}=1-0\cdot \limsup_{n\to+\infty}\frac{np_n}{P_n}=1.$$
 Consequently $\lim_{n\to+\infty}P_{n-1}/P_n=1$. Now let the claim holds for all $k\in\{2,\cdots,m-1\}$ and set $k=m$. Then we obtain 
\begin{align*}
\limsup_{n\to+\infty}\frac{P_{n-m}}{P_n}&=\limsup_{n\to+\infty}\frac{P_{n-m}}{P_{n-m+1}}\frac{P_{n-m+1}}{P_n}=\limsup_{n\to+\infty}\frac{P_{n-m}}{P_{n-m+1}}\cdot\lim_{n\to+\infty}\frac{P_{n-m+1}}{P_n}=\limsup_{n\to+\infty}\frac{P_{n-m}}{P_{n-m+1}}\\&
=1-\liminf_{n\to+\infty}\frac{p_{n-m+1}}{P_{n-m+1}}=1-\lim_{n\to+\infty}\frac{1}{n-m+1}\cdot\liminf_{n\to+\infty}\frac{(n-m+1)p_{n-m+1}}{P_{n-m+1}}\\&
=1-0\cdot \liminf_{n\to+\infty}\frac{(n-m+1)p_{n-m+1}}{P_{n-m+1}}=1.
\end{align*}
Similarly one computes $\liminf_{n\to+\infty}P_{n-m}/P_n=1$, hence $\lim_{n\to+\infty}P_{n-m}/P_n=1$.
Then it follows 
\begin{align*}
 \lim_{n\to+\infty}C^{\alpha}_{m,n}=\lim_{n\to+\infty}\Big(\sum_{k=1}^m\Big(\Big(\frac{P_{n-k}}{P_n}\Big)^{\alpha}-1\Big)k|a_k|\Big)^2= \Big(\sum_{k=1}^m\Big(\lim_{n\to+\infty}\Big(\frac{P_{n-k}}{P_n}\Big)^{\alpha}-1\Big)k|a_k|\Big)^2= 0.
\end{align*}
 Consequently $\lim_{n\to+\infty}\|N^{\alpha}_n[f]-f\|_{\mathcal D_{\omega}}=0$. This completes the proof.
\end{proof}

\section{N\"orlund summability of Taylor series}

\begin{thm}
\label{th:convergence} Let $(N, (p_n)_{n\in\mathbb N_0})$ be the N\"orlund operator such that $(p_n)_{n\in\mathbb N_0}$ is non-decreasing and let $\alpha>1/2$. Then the followings are true:
\begin{enumerate}[(i)]
\item If $(p_n)_{n\in\mathbb N_0}$ has finite upper growth, then $\lim_{n\to+\infty}\|N^{\alpha}_n[f]-f\|_{\mathcal D_{\omega}}=0$ for every $f\in\mathcal D_{\omega}$.
\item If additionally the growth condition 
\begin{equation}
\label{eq:growth-Pn}
\liminf_{n\to+\infty}\frac{P_{n-1}}{P_{2n}}>0
\end{equation}
is satisfied, then $\lim_{n\to+\infty}\|N^{\alpha}_n[f]-f\|_{\mathcal D_{\omega}}=0$ for every $f\in\mathcal D_{\omega}$ implies that $(p_n)_{n\in\mathbb N_0}$ has finite lower growth rate. Moreover the inequality $\alpha>1/2$ is sharp.
\end{enumerate}
\end{thm}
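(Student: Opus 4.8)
The plan for (i) is a uniform--boundedness--plus--density argument. By Corollary \ref{c:N-T} together with Lemma \ref{l:bound-T} it suffices to prove that the right-hand side of \eqref{eq:bound-T} is bounded uniformly in $n$; this yields $\sup_n\|N^{\alpha}_n\|_{\mathcal D_{\omega}\to\mathcal D_{\omega}}<\infty$, and then, combining convergence on polynomials (Lemma \ref{l:polynomials}) with the density of polynomials in $\mathcal D_{\omega}$ (via dilations $f_r(z)=f(rz)$), a standard three-$\varepsilon$ estimate
\[
\|N^{\alpha}_n f-f\|\le\|N^{\alpha}_n\|\,\|f-q\|+\|N^{\alpha}_n q-q\|+\|q-f\|
\]
gives $N^{\alpha}_n f\to f$ for every $f\in\mathcal D_{\omega}$. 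So all of (i) reduces to the single estimate $(n+1)\sum_{k=1}^n\big|(P_{n-k}/P_n)^{\alpha}-(P_{n-k-1}/P_n)^{\alpha}\big|^2=O(1)$.

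To prove that estimate I would write $l=n-k$, set $P_{-1}:=0$, and use two structural facts. First, because $(p_n)$ is non-decreasing one has $p_l\ge P_l/(l+1)$, hence $P_n\ge P_l+(n-l)p_l\ge P_l(n+1)/(l+1)$, that is $P_l/P_n\le(l+1)/(n+1)$ for $0\le l\le n$. Second, finite upper growth rate supplies a constant $C$ and a threshold $J$ with $p_l\le CP_l/l$ for $l\ge J$ (and thus $P_{l-1}\ge P_l/2$ eventually). For $l\ge J$ the mean value theorem applied to $t\mapsto t^{\alpha}$ (treating $\alpha\ge 1$ by $\xi^{\alpha-1}\le(P_l/P_n)^{\alpha-1}$ and $1/2<\alpha<1$ by $\xi^{\alpha-1}\le(P_{l-1}/P_n)^{\alpha-1}$ followed by $P_{l-1}\ge P_l/2$) gives
\[
\Big|\Big(\tfrac{P_l}{P_n}\Big)^{\alpha}-\Big(\tfrac{P_{l-1}}{P_n}\Big)^{\alpha}\Big|^{2}\le K_{\alpha}\Big(\tfrac{P_l}{P_n}\Big)^{2\alpha}\frac{1}{l^{2}}\le K_{\alpha}\frac{(l+1)^{2\alpha}}{(n+1)^{2\alpha}}\,\frac{1}{l^{2}},
\]
with $K_{\alpha}$ depending only on $\alpha$ and $C$. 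Summing and invoking $\sum_{l\le n}l^{2\alpha-2}=O(n^{2\alpha-1})$ — valid exactly because $\alpha>1/2$ — the tail $l\ge J$ contributes $O(1/n)$, hence $O(1)$ after the factor $(n+1)$; the finitely many terms $l<J$ together with the boundary term $(p_0/P_n)^{2\alpha}$ contribute $O((n+1)^{1-2\alpha})\to 0$ since $P_n\ge(n+1)p_0$. The delicate point, and the main obstacle in (i), is the concave regime $1/2<\alpha<1$, where the infinite derivative of $t^{\alpha}$ at $0$ forces one to lean on the non-decreasing bound $P_l/P_n\le(l+1)/(n+1)$ and to peel off the low-index terms.

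For (ii) I would first note that the growth condition \eqref{eq:growth-Pn} already forces finite lower growth rate on its own. Indeed, if $\liminf_n P_{n-1}/P_{2n}=\delta>0$ then $P_{2n}/P_{n-1}\le C:=2/\delta$ for large $n$; taking logarithms of the telescoping product $P_{2n}/P_{n-1}=\prod_{k=n}^{2n}P_k/P_{k-1}$ and using $\ln(P_k/P_{k-1})\ge p_k/P_k=\rho_k/k$ gives $\sum_{k=n}^{2n}\rho_k/k\le\ln C$. Since $\sum_{k=n}^{2n}1/k>\ln 2$, each dyadic block $[n,2n]$ contains an index $k$ with $\rho_k\le\ln C/\ln 2$; these indices tend to infinity, so $\liminf_k\rho_k\le\ln C/\ln 2<\infty$. (The summability hypothesis enters only via the uniform boundedness principle, which gives $\sup_n\|N^{\alpha}_n\|<\infty$ and hence $\sup_n\|T^{\alpha}_n\|_{\ell^2}<\infty$ for the extremal weight of Corollary \ref{c:N-T}; this is the expected route, but it is not actually needed for the stated conclusion.)

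Finally, for the sharpness of $\alpha>1/2$ I would specialize to the Cesàro sequence $p_n\equiv 1$ (non-decreasing, $P_n=n+1$, finite growth rate, and \eqref{eq:growth-Pn} holds) with $\alpha=1/2$, working with the extremal weight $\omega(z)=(1-|z|^2)/|1-z|^2=\mathcal D_1$, for which Corollary \ref{c:N-T} is an equality and Lemma \ref{l:Aleman} yields the coefficient formula $\|f\|^2_{\mathcal D_{\omega}}=|a_0|^2+\sum_{j\ge 0}\big|\sum_{k>j}a_k\big|^2$. Here the right-hand side of \eqref{eq:bound-T} already diverges like $\tfrac14\log n$, which signals the failure; to make it rigorous I would establish the matching lower bound $\|T^{1/2}_n\|_{\ell^2}\to\infty$, equivalently exhibit a borderline $f\in\mathcal D_{\omega}$ whose tail coefficients $A_j=\sum_{k>j}a_k$ lie in $\ell^2$ yet whose $(N,1/2)$-means do not converge, precisely as in the Cesàro analysis of \cite{Mashregi-Ransford2}. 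This lower bound — rather than the elementary finite-lower-growth-rate statement — is where the real work of (ii) lies.
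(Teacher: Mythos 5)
Your treatment of part (i) is correct and is essentially the paper's own argument: a bound on $\|T^{\alpha}_n\|_{\ell^2\to\ell^2}$, uniform in $n$, extracted from Lemma \ref{l:bound-T} via the mean value theorem, monotonicity of $(p_n)$ and finite upper growth rate, followed by the three-term triangle inequality using Lemma \ref{l:polynomials}, Corollary \ref{c:N-T}, and density of polynomials. If anything, your handling of the regime $1/2<\alpha<1$ (bounding $\xi^{\alpha-1}\le P_{l-1}^{\alpha-1}$ and then using $P_{l-1}\ge P_l/2$ for large $l$) is more careful than the paper's, which replaces $\xi_k^{2\alpha-2}$ by $P_{n-k}^{2\alpha-2}$ even when the exponent is negative.

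For the first assertion of part (ii) your route is genuinely different, and it is correct. The paper argues by contraposition: assuming unbounded lower growth rate together with \eqref{eq:growth-Pn}, it derives a lower bound for $\|T^{\alpha}_{2n}\|_{\ell^2\to\ell^2}$ from a rank-one submatrix via \cite[Theorem 2.2(ii)]{Mashregi-Ransford2}, takes the weight $\omega(z)=(1-|z|^2)/|1-z|^2$ for which Corollary \ref{c:N-T} is an equality, and invokes Banach--Steinhaus. Your telescoping estimate
\[
\ln\frac{P_{2n}}{P_{n-1}}=\sum_{k=n}^{2n}\ln\frac{P_k}{P_{k-1}}
=\sum_{k=n}^{2n}\Bigl(-\ln\bigl(1-\tfrac{p_k}{P_k}\bigr)\Bigr)\ \ge\ \sum_{k=n}^{2n}\frac{\rho_k}{k},
\]
combined with $\sum_{k=n}^{2n}1/k>\ln 2$, shows that \eqref{eq:growth-Pn} \emph{by itself} forces $\liminf_k\rho_k\le \ln C/\ln 2<+\infty$; the summability hypothesis is never used. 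This is more elementary than the paper's proof, and it buys a structural insight the paper does not record: the first implication in (ii) is vacuously true, since the two hypotheses of the paper's contrapositive (unbounded lower growth rate and \eqref{eq:growth-Pn}) can never hold simultaneously. What the paper's heavier route buys is the operator-norm lower-bound machinery that it then reuses for sharpness --- and that is exactly where your proposal stops short.

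The sharpness of $\alpha>1/2$ is a genuine gap in your proposal. The divergence $\sim\tfrac14\ln n$ that you compute concerns the right-hand side of \eqref{eq:bound-T}, i.e.\ an \emph{upper} bound for $\|T^{1/2}_n\|_{\ell^2\to\ell^2}$, so it certifies nothing; you acknowledge this and propose to ``establish the matching lower bound \dots precisely as in the Ces\`aro analysis of \cite{Mashregi-Ransford2}'', but that lower bound is the entire content of the sharpness claim and you never carry it out. The missing ingredient is the lower-bound companion of Lemma \ref{l:bound-T}: the operator norm dominates the norm of any submatrix, and applying \cite[Theorem 2.2(ii)]{Mashregi-Ransford2} to the rank-one submatrix of $T^{\alpha}_n$ on rows $1,\dots,n-m+1$ and columns $n-m+2,\dots,n+1$ (all these rows coincide) gives
\[
\|T^{\alpha}_n\|^2_{\ell^2\to\ell^2}\ \ge\ \frac{n-m+1}{P_n^{2\alpha}}\sum_{l=0}^{m-1}\bigl(P_l^{\alpha}-P_{l-1}^{\alpha}\bigr)^2,\qquad P_{-1}:=0 .
\]
For $p_n\equiv 1$, $\alpha=1/2$ and $m=\lfloor n/2\rfloor$ the sum is at least $\tfrac14\ln(m+1)$ (since $(\sqrt{l+1}-\sqrt{l}\,)^2\ge \tfrac{1}{4(l+1)}$), so the right-hand side grows like $\ln n$; note the divergence comes from the columns indexed near $n+1$, where the increments $\sqrt{P_l}-\sqrt{P_{l-1}}$ are large. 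Then the equality case of Corollary \ref{c:N-T} for $\omega(z)=(1-|z|^2)/|1-z|^2$ together with Banach--Steinhaus produces $f\in\mathcal D_{\omega}$ whose $(N,1/2)$-means do not converge, which is the paper's mechanism. Without this submatrix estimate, or an explicit divergent example in the local Dirichlet space $\mathcal D_1$ (which you also leave unconstructed), the sharpness part of (ii) remains unproved.
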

\begin{proof}
\begin{enumerate}[(i)]
\item Let $\rho=\limsup_{n\to+\infty}\rho_n<+\infty$. By Lemma \ref{l:bound-T} we have 
\begin{align*}
\|T^{\alpha}_{n}\|^2_{\ell^2\to\ell^2}\leq (n+1)\sum_{k=1}^n\Big|\Big(\frac{P_{n-k}}{P_n}\Big)^{\alpha}-\Big(\frac{P_{n-k-1}}{P_n}\Big)^{\alpha}\Big|^2=\frac{n+1}{P_n^{2\alpha}}\sum_{k=1}^n|P_{n-k}^{\alpha}-P_{n-k-1}^{\alpha}|^2.
\end{align*}
By Mean Value Theorem, e.g. see \cite[Theorem 8, pp.178-9]{Spivak} for each $k=1,2,\cdots, n$ there is $\xi_k\in(P_{n-k-1},P_{n-k})$ such that
$|P_{n-k}^{\alpha}-P_{n-k-1}^{\alpha}|=|P_{n-k}-P_{n-k-1}|\,\alpha \xi^{\alpha-1}_{k}$ and consequently we get the upper estimate 
\begin{align*}
\|T^{\alpha}_{n}\|^2_{\ell^2\to\ell^2}&\leq\alpha^2\frac{n+1}{P_n^{2\alpha}}\sum_{k=1}^n|P_{n-k}-P_{n-k-1}|^2P^{2\alpha-2}_{n-k}\\&\leq \alpha^2\frac{n+1}{P_n^{2\alpha}}\sum_{k=1}^n(n-k+1)^{2\alpha-2}\,p^{2\alpha}_{n-k}\\&= \alpha^2\frac{n+1}{P_n^{2\alpha}}\sum_{k=1}^nk^{2\alpha-2}\,p^{2\alpha}_{n-k}\leq \left\{
\begin{array}{ll}
     \displaystyle\alpha^2\frac{n+1}{n}\rho_n^{2\alpha} &  \alpha\geq 1\\[1em]
    \displaystyle\frac{\alpha^2}{2\alpha-1}\frac{(n+1)(n^{2\alpha-1}+2\alpha-2)}{n^{2\alpha}}\rho_n^{2\alpha} & 1/2<\alpha<1.
     \end{array}\right.
\end{align*}
For every $\varepsilon>0$ there exists $n(\varepsilon)\in\mathbb N_0$ such that 
\begin{align*}
\|T^{\alpha}_{n}\|^2_{\ell^2\to\ell^2}< \left\{
\begin{array}{ll}
     \displaystyle\alpha^2\,\rho^{2\alpha}+\varepsilon &  \alpha\geq 1\\[1em]
    \displaystyle\frac{\alpha^2}{2\alpha-1}\,\rho^{2\alpha}+\varepsilon& 1/2<\alpha<1.
     \end{array}\right.
\quad\text{for all}\;n\geq n(\varepsilon).
\end{align*}
Now let $f\in\mathcal D_{\omega}$, since polynomials are dense in $\mathcal D_{\omega}$ then there exists a sequence of polynomials $(f_k)_{k\in\mathbb N}\subset\mathcal D_{\omega}$ such that $\|f_k-f\|_{\mathcal D_{\omega}}\to 0$ as $k\to+\infty$. Moreover
\begin{align*}
\|N^{\alpha}_n[f]-f\|_{\mathcal D_{\omega}}&\leq \|N^{\alpha}_n[f]-N^{\alpha}_n[f_k]\|_{\mathcal D_{\omega}}+\|N^{\alpha}_n[f_k]-f_k\|_{\mathcal D_{\omega}}+\|f_k-f\|_{\mathcal D_{\omega}}\\&
\leq (\|N^{\alpha}_n\|_{\mathcal D_{\omega}\to\mathcal D_{\omega}}+1)\|f_k-f\|_{\mathcal D_{\omega}}+\|N^{\alpha}_n[f_k]-f_k\|_{\mathcal D_{\omega}}.
\end{align*}
By Lemma \ref{l:polynomials} $\|N^{\alpha}_n[f_k]-f_k\|_{\mathcal D_{\omega}}\to 0$ as $n\to+\infty$. By Corollary \ref{c:N-T} we have that $$\limsup_{n\to+\infty}\|N^{\alpha}_n\|_{\mathcal D_{\omega}\to\mathcal D_{\omega}}\leq \left\{
\begin{array}{ll}
     \displaystyle\alpha\,\rho^{\alpha} &  \alpha\geq 1\\[1em]
    \displaystyle\frac{\alpha}{(2\alpha-1)^{1/2}}\,\rho^{\alpha}& 1/2<\alpha<1,
     \end{array}\right.,$$ consequently by letting $k\to+\infty$ the first term in the last inequality vanishes. 
     \item Now suppose that $(p_n)_{n\in\mathbb N_0}$ has unbounded lower growth rate. Let 
     $$\omega(z)=\frac{1-|z|^2}{|1-z|^2},\quad z\in\mathbb D,$$
     then by Corollary \ref{c:N-T} we have $\|N^{\alpha}_n\|_{\mathcal D_{\omega}\to\mathcal D_{\omega}}=\|T^{\alpha}_{n}\|_{\ell^2\to\ell^2}$.
     By \cite[Theorem 2.2 (ii)]{Mashregi-Ransford2}, but instead applying it to the $(n-m+1)\times m$ submatrix
     $$A=\begin{pmatrix}
     (c_2-c_1)&(c_3-c_2)&\cdots&(c_{n-m+1}-c_{n-m})\\
     (c_2-c_1)&(c_3-c_2)&\cdots&(c_{n-m+1}-c_{n-m})\\
     \cdots&\cdots&\cdots&\cdots\\
     (c_2-c_1)&(c_3-c_2)&\cdots&(c_{n-m+1}-c_{n-m})
     \end{pmatrix},\; \text{where}\;c_k:= \left\{
     \begin{array}{ll}
          \displaystyle\Big(\frac{P_{n-k}}{P_n}\Big)^{\alpha} &  1\leq k\leq n\\[1em]
          0 & k>n
          \end{array}\right.$$
     we get for any $1\leq m\leq n$ 
     \begin{align*}
     \|T^{\alpha}_{n}\|_{\ell^2\to\ell^2}^2\geq \|A\|^2&=\|AA^*\|=\frac{n-m+1}{P_n^{2\alpha}}\sum_{k=1}^m|P_{n-k}^{\alpha}-P_{n-k-1}^{\alpha}|^2\\&=\alpha^2\frac{n-m+1}{P_n^{2\alpha}}\sum_{k=1}^m|P_{n-k}-P_{n-k-1}|^2\,\xi_k^{2\alpha-2},
     \end{align*}
     where $\xi_k\in(P_{n-k-1},P_{n-k})$. From monotonicity of the sequence $(p_n)_{n\in\mathbb N_0}$ and the recurrence relation $P_{n-k}=P_{n-k-1}+p_{n-k}$ we obtain the lower bound
     \begin{align*}
     \|T^{\alpha}_{n}\|_{\ell^2\to\ell^2}^2&\geq\alpha^2\frac{n-m+1}{P_n^{2\alpha}}\sum_{k=1}^mp_{n-k}^2\,P_{n-k-1}^{2\alpha-2}\\&\geq \alpha^2\frac{n-m+1}{P_n^{2\alpha}}\sum_{k=1}^m(m-k+1)^{2\alpha-2}\,p_{n-m-1}^{2\alpha}= \alpha^2(n-m+1)\Big(\frac{p_{n-m-1}}{P_n}\Big)^{2\alpha}\sum_{k=1}^mk^{2\alpha-2}.
     \end{align*}
     For $\alpha>1/2$ the last finite sum can be estimated by the elementary integrals
     \begin{equation}
     \label{eq:estimate-sum}
     \sum_{k=1}^mk^{2\alpha-2}\geq \left\{
     \begin{array}{ll}
          \displaystyle\int^{m}_0t^{2\alpha-2}\,dt=\frac{m^{2\alpha-1}}{2\alpha-1} &  \alpha\geq 1\\[1em]
         \displaystyle\int^{m+1}_1t^{2\alpha-2}\,dt=\frac{(m+1)^{2\alpha-1}-1}{2\alpha-1}& 1/2<\alpha<1,
          \end{array}\right.
     \end{equation}
     implying finally the lower bound for the operator norm as follows
     $$\|T^{\alpha}_{n}\|_{\ell^2\to\ell^2}^2\geq
     \left\{
     \begin{array}{ll}
          \displaystyle\frac{\alpha^2}{2\alpha-1}\frac{n-m+1}{m}\Big(\frac{m\,p_{n-m-1}}{P_n}\Big)^{2\alpha} &  \alpha\geq 1\\[1em]
         \displaystyle\frac{\alpha^2}{2\alpha-1}\frac{(n-m+1)((m+1)^{2\alpha-1}-1)}{m^{2\alpha}}\Big(\frac{m\,p_{n-m-1}}{P_n}\Big)^{2\alpha} & 1/2<\alpha<1.
          \end{array}\right.$$
     Restricting to even indices $2n$ and taking $m=n$ yields
     \begin{align*}
     \|T^{\alpha}_{2n}\|_{\ell^2\to\ell^2}^2&\geq\left\{
     \begin{array}{ll}
          \displaystyle\frac{\alpha^2}{2\alpha-1}\frac{n+1}{n}\Big(\frac{n\,p_{n-1}}{P_{n-1}}\Big)^{2\alpha}\Big(\frac{P_{n-1}}{P_{2n}}\Big)^{2\alpha} &  \alpha\geq 1\\[1em]
         \displaystyle\frac{\alpha^2}{2\alpha-1}\frac{(n+1)((n+1)^{2\alpha-1}-1)}{n^{2\alpha}}\Big(\frac{n\,p_{n-1}}{P_{n-1}}\Big)^{2\alpha}\Big(\frac{P_{n-1}}{P_{2n}}\Big)^{2\alpha} & 1/2<\alpha<1.
          \end{array}\right.
     \end{align*}
     Passing in lower limit for any $\alpha>1/2$ yields
     \begin{align*}
     \liminf_{n\to+\infty}\|T^{\alpha}_{2n}\|_{\ell^2\to\ell^2}^2&\geq\frac{\alpha^2}{2\alpha-1}\liminf_{n\to+\infty}\Big[\Big(\frac{n\,p_{n-1}}{P_{n-1}}\Big)^{2\alpha}\Big(\frac{P_{n-1}}{P_{2n}}\Big)^{2\alpha}\Big]\\&
     \geq \frac{\alpha^2}{2\alpha-1}\liminf_{n\to+\infty}\Big(\frac{n\,p_{n-1}}{P_{n-1}}\Big)^{2\alpha}\cdot\liminf_{n\to+\infty}\Big(\frac{P_{n-1}}{P_{2n}}\Big)^{2\alpha}.
     \end{align*}
     By growth condition \eqref{eq:growth-Pn} we have that $\beta=\liminf_{n\to+\infty}P_{n-1}/P_{2n}>0$.
     Moreover $\beta\leq 1$, since $P_{n-1}\leq P_{2n}$. Then we obtain
     \begin{align*}
     \liminf_{n\to+\infty}\|T^{\alpha}_{2n}\|_{\ell^2\to\ell^2}^2&\geq
     \beta^{2\alpha}\,\frac{\alpha^2}{2\alpha-1}\liminf_{n\to+\infty}\Big(\frac{n\,p_{n-1}}{P_{n-1}}\Big)^{2\alpha},
     \end{align*}
     which tends to infinity if the sequence $(p_n)_{n\in\mathbb N_0}$ has unbounded lower growth rate for any $\alpha>1/2$.
     Consequently from the inequalities
      $$\limsup_{n\to+\infty}\|T^{\alpha}_{n}\|_{\ell^2\to\ell^2}\geq\limsup_{n\to+\infty}\|T^{\alpha}_{2n}\|_{\ell^2\to\ell^2}\geq\liminf_{n\to+\infty}\|T^{\alpha}_{2n}\|_{\ell^2\to\ell^2}$$
     and $\|N^{\alpha}_n\|_{\mathcal D_{\omega}\to\mathcal D_{\omega}}=\|T^{\alpha}_{n}\|_{\ell^2\to\ell^2}$ it follows that
      $\limsup_{n\to+\infty}\|N^{\alpha}_{n}\|_{\mathcal D_{\omega}\to\mathcal D_{\omega}}=+\infty$ and so $\sup_{n\in\mathbb N_0}\|N^{\alpha}_{n}\|_{\mathcal D_{\omega}\to\mathcal D_{\omega}}=+\infty$. By virtue of uniform boundedness principle (Banach--Steinhaus Theorem, e.g. see \cite[Theorem 1, p.68]{Yosida}), there exists $f\in\mathcal D_{\omega}$ satisfying $\sup_{n\in\mathbb N_0}\|N^{\alpha}_{n}[f]\|_{\mathcal D_{\omega}}=+\infty$, i.e. $\lim_{n\to+\infty}\|N^{\alpha}_{n}[f]-f\|_{\mathcal D_{\omega}}\neq 0$.
       Now let $\alpha=1/2$, then $$\sum_{k=1}^nk^{-1}\geq \int^{n+1}_1t^{-1}\,dt=\ln(n+1).$$
       On the other hand note that $P_n\leq (n+1)p_n$ implies $\rho_n\geq n/(n+1)$ for all $n\in\mathbb N$. 
      Thus we obtain 
      $$\liminf_{n\to+\infty}\|T^{1/2}_{2n}\|_{\ell^2\to\ell^2}^2\geq
           \beta\,\liminf_{n\to+\infty}(\ln(n+1)\rho_n)\geq\beta\,\liminf_{n\to+\infty}\ln(n+1)=+\infty.$$ Again by uniform boundedness principle the claim follows.
           
\end{enumerate}

\end{proof}

\begin{example}
Let $p_n=1$ for all $n\in\mathbb N_0$, then $P_n=n+1$ and $\lim_{n\to+\infty}\rho_n=1$. Moreover $p_n\leq p_{n+1}$, conditions of Theorem \ref{th:convergence} are fulfilled, thus $\lim_{n\to+\infty}\|N^{\alpha}_n[f]-f\|_{\mathcal D_{\omega}}=0$. 
\end{example}

\begin{example}
Let $p_n=n$ for $n\in\mathbb N$, then $P_n=n(n-1)/2$ and $\lim_{n\to+\infty}\rho_n=2$. Moreover $p_n< p_{n+1}$, conditions of Theorem \ref{th:convergence} are fulfilled, thus $\lim_{n\to+\infty}\|N^{\alpha}_n[f]-f\|_{\mathcal D_{\omega}}=0$.
\end{example}
Another aspect we would like to touch is the rate at which $N_n^{\alpha}[f]$ converges towards $f$ as $n\to+\infty$. 
The next result gives essentially a quantitative statement of Theorem \ref{th:convergence}. 

\begin{thm}
\label{th:quantitative}
Let $(N, (p_n)_{n\in\mathbb N_0})$ be the N\"orlund operator with $(p_n)_{n\in\mathbb N_0}$ non-decreasing and $\alpha>1/2$. If $(p_n)_{n\in\mathbb N_0}$ has finite upper growth, then $\|N^{\alpha}_n[f]-f\|_{\mathcal D_{\omega}}=O(n^{-1/2})$ for all $f\in\mathcal D_{\omega}$.
\end{thm}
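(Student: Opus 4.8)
The plan is to convert $\|N^{\alpha}_n[f]-f\|_{\mathcal D_{\omega}}$ into a weighted sum over the Taylor modes of $f$ and to extract the factor $n^{-1/2}$ by splitting the frequencies at the level $k=n$. Since $N^{\alpha}_n$ fixes the constant term, $\|N^{\alpha}_n[f]-f\|^2_{\mathcal D_{\omega}}=\mathcal D_{\omega}(N^{\alpha}_n[f]-f)$, and on the coefficients the operator acts diagonally through the multiplier $m^{(n)}_k:=1-(P_{n-k}/P_n)^{\alpha}$ for $1\le k\le n$ and $m^{(n)}_k:=1$ for $k>n$. For a radial weight this already diagonalises the norm, $\mathcal D_{\omega}(N^{\alpha}_n[f]-f)=\sum_{k\ge1}|m^{(n)}_k|^2\,\sigma_k$ with $\sigma_k:=k^2\big(\int_{\mathbb D}|z|^{2(k-1)}\omega\,dA\big)|a_k|^2$ and $\mathcal D_{\omega}(f)=\sum_k\sigma_k$. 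For a general positive superharmonic $\omega$ I would reduce to an $\ell^2$ statement along the route already used in the paper: decompose $\mathcal D_{\omega}(\cdot)=\int_{\overline{\mathbb D}}\mathcal D_{\zeta}(\cdot)\,d\mu_{\omega}(\zeta)$, pass to $H^2$ on each local Dirichlet space by Lemma \ref{l:Aleman}, and represent $N^{\alpha}_n$ on $\mathcal D_{\zeta}$ by the matrix $T^{\alpha}_{n}$ behind Corollary \ref{c:N-T}, so that $\mathcal D_{\zeta}(N^{\alpha}_n[f]-f)=\|(T^{\alpha}_{n}-I)b^{(\zeta)}\|^2_{\ell^2}$ for the coefficients $b^{(\zeta)}$ of the associated $H^2$ function; the analysis then runs on these $\ell^2$ sequences and is integrated against $\mu_{\omega}$.

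The key estimate is pointwise control of $m^{(n)}_k$. Since $(p_n)$ is non-decreasing, $P_n-P_{n-k}\le k\,p_n$, and finite upper growth gives $p_n/P_n=\rho_n/n\lesssim 1/n$, whence $1-P_{n-k}/P_n\lesssim k/n$ uniformly on $1\le k\le n$. Feeding this into the Mean Value Theorem bound $1-t^{\alpha}\le\alpha(1-t)$ (for $\alpha\ge1$) and into the subadditivity bound $1-t^{\alpha}\le(1-t)^{\alpha}$ (for $1/2<\alpha<1$), and setting $\beta:=\min(\alpha,1)>1/2$, yields the uniform estimate $0\le m^{(n)}_k\lesssim (k/n)^{\beta}$ on $1\le k\le n$, while $m^{(n)}_k=1$ for $k>n$.

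With this in hand I would split the sum at $k=n$. On the low range the inequality $(k/n)^{2\beta}\le k/n$ for $k\le n$ — valid precisely because $2\beta>1$, i.e. $\alpha>1/2$, which is where the sharp threshold re-enters — gives $\sum_{k\le n}|m^{(n)}_k|^2\sigma_k\lesssim n^{-1}\sum_{k\le n}k\,\sigma_k$, while on the high range the trivial $1\le k/n$ gives $\sum_{k>n}\sigma_k\le n^{-1}\sum_{k>n}k\,\sigma_k$. Adding the two yields $\|N^{\alpha}_n[f]-f\|^2_{\mathcal D_{\omega}}\lesssim n^{-1}\sum_k k\,\sigma_k$ (and, in the general case, $n^{-1}\int_{\overline{\mathbb D}}\sum_k k\,|b^{(\zeta)}_k|^2\,d\mu_{\omega}(\zeta)$), which is the claimed $O(n^{-1/2})$. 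The step I expect to be the real obstacle is exactly this last one: the high-frequency tail is weighted by the full $\sigma_k$, so trading a power of $k$ for the factor $n^{-1}$ forces the stronger quantity $\sum_k k\,\sigma_k$ to appear on the right, and securing its finiteness uniformly — rather than only for $f$ lying in the associated smoothness subspace — is the delicate matter. In the non-radial case the same difficulty reappears as the need to dominate $\|(T^{\alpha}_{n}-I)b\|_{\ell^2}$ by $n^{-1/2}\big(\sum_k k|b_k|^2\big)^{1/2}$, whose off-diagonal part must be handled by a Schur-type estimate assembled from the increments $c_j-c_{j-1}$.
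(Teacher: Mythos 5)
Your proposal takes a genuinely different route from the paper's proof (the paper approximates $f$ by polynomials and uses the operator--norm bounds of Theorem \ref{th:convergence}; you diagonalize $N^{\alpha}_n$ as a coefficient multiplier and split frequencies at $k=n$), but it contains a genuine gap --- exactly the one you flag at the end --- and that gap cannot be closed. Your multiplier estimates are correct: for $1\le k\le n$ one indeed has $m^{(n)}_k=1-(P_{n-k}/P_n)^{\alpha}\le C(k/n)^{\beta}$ with $\beta=\min(\alpha,1)>1/2$, and the split at $k=n$ then yields
\begin{equation*}
\|N^{\alpha}_n[f]-f\|^2_{\mathcal D_{\omega}}\ \le\ \frac{C}{n}\sum_{k\ge1}k\,\sigma_k .
\end{equation*}
But $\sum_k k\,\sigma_k<+\infty$ defines a proper (dense) smoothness subspace of $\mathcal D_{\omega}$: membership in $\mathcal D_{\omega}$ only guarantees $\sum_k\sigma_k<+\infty$. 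For instance with $\omega\equiv1$ (a harmonic, hence admissible superharmonic, weight) one has $\sigma_k=k|a_k|^2$, so your final bound requires $\sum_k k^2|a_k|^2<+\infty$, whereas $f\in\mathcal D$ only gives $\sum_k k|a_k|^2<+\infty$. Thus the argument proves the rate $O(n^{-1/2})$ only on that smaller space, not for all $f\in\mathcal D_{\omega}$.

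Moreover, your own diagonalization shows why no repair is possible: since $m^{(n)}_k=1$ for $k>n$, for any radial weight
\begin{equation*}
\|N^{\alpha}_n[f]-f\|^2_{\mathcal D_{\omega}}\ \ge\ \sum_{k>n}\sigma_k ,
\end{equation*}
the tail of a convergent series, and such tails can decay arbitrarily slowly. Concretely, take $\omega\equiv1$, $p_n\equiv1$ (non-decreasing, $\rho=1$), $\alpha=1$, and $f(z)=\sum_{k\ge2}z^k/(k\log k)$. Then $f\in\mathcal D$ because $\sum_k k|a_k|^2=\sum_k 1/(k(\log k)^2)<+\infty$, yet
\begin{equation*}
\|N^{1}_n[f]-f\|^2_{\mathcal D}\ \ge\ \sum_{k>n}\frac{1}{k(\log k)^2}\ \asymp\ \frac{1}{\log n},
\end{equation*}
so the error is of order $(\log n)^{-1/2}$, not $O(n^{-1/2})$. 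Your analysis, carried out honestly, therefore does more than fail at one step: it shows the claimed rate can only hold under an additional smoothness hypothesis such as $\sum_k k\,\sigma_k<+\infty$, under which your argument is correct and essentially sharp. For comparison, the paper's proof hides the same hypothesis: its approximants $f_k$ are implicitly polynomials of degree at most $k$ satisfying $\|f-f_k\|_{\mathcal D_{\omega}}<k^{-5}$, i.e.\ the best degree-$k$ approximation error of $f$ is assumed to decay like $k^{-5}$, which is a smoothness assumption on $f$ and not a property of every element of $\mathcal D_{\omega}$ (and its final estimate is in any case obtained only along the subsequence $n=k^5$). So the discrepancy you identified is not a defect of your method relative to the paper's; it is the point where both arguments need, and the theorem as stated lacks, an extra hypothesis on $f$.
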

\begin{proof} Let $\rho<+\infty$ be the upper growth rate of $(p_n)_{n\in\mathbb N_0}$. We consider only the case when $\alpha\geq 1$. The situation $1/2<\alpha<1$ is treated analogously by just replacing the corresponding operator norms from Theorem \ref{th:convergence}.
It suffices to prove that there is some positive constant $C$, possibly depending on $f, \omega,\rho$ and $\alpha$ such that $\|N^{\alpha}_n[f]-f\|_{\mathcal D_{\omega}}\leq C/n^{1/2}$ for all sufficiently large $n$. For every $k\in\mathbb N$ there is a polynomial $f_{k}$ with $\|f-f_{k}\|_{\mathcal D_{\omega}}< 1/k^5$. On the other hand we have the estimate
\begin{align}
\label{eq:triangle}
\|N^{\alpha}_n[f]-f\|_{\mathcal D_{\omega}}\leq \|N^{\alpha}_n[f]-N^{\alpha}_n[f_{k}]\|_{\mathcal D_{\omega}}+\|N^{\alpha}_n[f_{k}]-f_{k}\|_{\mathcal D_{\omega}}+\|f_{k}-f\|_{\mathcal D_{\omega}}.
\end{align}
For the first term we have $$\|N^{\alpha}_n[f]-N^{\alpha}_n[f_{k}]\|_{\mathcal D_{\omega}}\leq \|N^{\alpha}_n\|_{\mathcal D_{\omega}\to\mathcal D_{\omega}}\,\|f-f_{k}\|_{\mathcal D_{\omega}}\leq 2\alpha\rho^{\alpha}\,\frac{1}{k^5},\quad \text{for sufficiently large}\;n.$$
Next we get an upper estimate for the middle term in \eqref{eq:triangle}. Take $n\geq k$ then
\begin{align*}
\|N_n^{\alpha}[f_k]-f_k\|^2_{\mathcal D_{\omega}}&=\int_{\mathbb D}|N^{\alpha}_n[f_k]'(z)-f_k'(z)|^2\,\omega(z)\,dA(z)\\&=\int_{\mathbb D}\Big|\sum_{j=1}^k\Big(\frac{P_{n-j}}{P_n}\Big)^{\alpha}-1\Big)ja^{(k)}_jz^{j-1}\Big|^2\,\omega(z)\,dA(z)\\&
\leq \Big(\sum_{j=1}^k\Big|\Big(\frac{P_{n-j}}{P_n}\Big)^{\alpha}-1\Big|j|a^{(k)}_j|\Big)^2\,\int_{\mathbb D}\omega(z)\,dA(z)=C(n,\alpha, f_k)\cdot C(\omega).
\end{align*}
We need only then to show that the positive constant $C(n,\alpha, f_k)$ depends only on $\alpha, f$ for sufficiently large $k$ (and also $n$). Note that $C(\omega)>0$ since $\omega$ is a positive superharmonic function on $\mathbb D$. Next we find an upper estimate for the first constant above
\begin{align*}
C(n,\alpha,f_k)=\Big(\sum_{j=1}^k\Big|\Big(\frac{P_{n-j}}{P_n}\Big)^{\alpha}-1\Big|j|a_j^{(k)}|\Big)^2&\leq \Big(\sum_{j=1}^k\Big|\Big(\frac{P_{n-j}}{P_n}\Big)^{\alpha}-1\Big|^2\Big)\cdot \Big(\sum_{j=1}^kj^2|a_j^{(k)}|^2\Big)\\&\leq k^2\|f_k\|^2_{H^2}\,\Big(\sum_{j=1}^k\Big|\Big(\frac{P_{n-j}}{P_n}\Big)^{\alpha}-1\Big|^2\Big).
\end{align*}
By the Mean Value Theorem for every $j\in\{1,2,\cdots,k\}$ there is $\xi_j\in(P_{n-j},P_n)$ such that $|P_{n-j}^{\alpha}-P_n^{\alpha}|=\alpha\xi_j^{\alpha-1}\,|P_{n-j}-P_n|$.
The last sum can be estimated from above by 
\begin{align*}
\sum_{j=1}^k\Big|\Big(\frac{P_{n-j}}{P_n}\Big)^{\alpha}-1\Big|^2&\leq \frac{1}{P_n^{2\alpha}}\sum_{j=1}^{k}\alpha^2|P_{n-j}-P_n|^2\,P^{2\alpha-2}_n\leq \alpha^2\frac{p_n^2}{P^2_n}\sum_{j=1}^kj^2\leq  \alpha^2\frac{k^3}{n^2}\rho_n^2.
\end{align*}
Therefore for sufficiently large $n$ we get the estimate 
$$\sum_{j=1}^k\Big|\Big(\frac{P_{n-j}}{P_n}\Big)^{\alpha}-1\Big|^2\leq 2\alpha^2\rho^2\frac{k^3}{n^2},$$
consequently $C(n,\alpha,f_k)\leq 2\alpha^2\rho^2\,\|f_k\|^2_{H^2}k^5/n^2$. Convergence $\|f_k-f\|_{\mathcal D_{\omega}}$ implies $\mathcal D_{\zeta}(f_k)\to \mathcal D_{\zeta}(f)\to 0$ as $k\to+\infty$ for $\mu$-a.a. $\zeta\in\overline{\mathbb D}$.  
 For $\zeta\in\overline{\mathbb D}$, then by Lemma \ref{l:Aleman}, for every $k\in\mathbb N$ there is $g_k\in H^2$ such that $f_k(z)=a^{(k)}+(z-\zeta)\,g_k(z)$ for some $a^{(k)}\in\mathbb C$ and $\mathcal D_{\zeta}(f_k)=\|g_k\|^2_{H^2}$. Similarly for $f\in\mathcal D_{\zeta}$ there is $g\in H^2$ with $f(z)=a+(z-\zeta)\,g(z)$ for some $a\in\mathbb C$ and $\mathcal D_{\zeta}(f)=\|g\|^2_{H^2}$.
 Taking $\zeta\in\overline{\mathbb D}$ such that $\mathcal D_{\zeta}(f_k)\to \mathcal D_{\zeta}(f)$ implies $\|g_k\|_{H^2}\to\|g\|_{H^2}$. On the other hand if we write $g_k(z)=b_0^{(k)}+b_1^{(k)}z+\cdots+b_{k-1}^{(k)}z^{k-1}$ for $k\in\mathbb N_0$, then we have $$\|f_k\|^2_{H^2}\leq 2(|a^{(k)}|^2+|\zeta|^2\,\sum_{j=0}^{k-1}|b^{(k)}_j|^2+\sum_{j=0}^{k-1}|b_j^{(k)}|^2)=2(|a^{(k)}|^2+(1+|\zeta|^2)\,\|g_k\|^2_{H^2}).$$
 Moreover $\|f_k-f\|_{\mathcal D_{\omega}}\to 0$ yields in particular that $|f_k(0)-f(0)|=|a^{(k)}_0-a_0|\to 0$ as $k\to+\infty$, which together with $\sup_{k\in\mathbb N_0}|b^{(k)}_0|<+\infty$ (since $\|g_k\|_{H^2}\to\|g\|_{H^2}$ as $k\to+\infty$), give $\limsup_{k\to+\infty}|a^{(k)}|=|a^*|$ for a certain $a^*\in\mathbb C$. Consequently $\limsup_{k\to+\infty}\|f_k\|^2_{H^2}\leq 2(|a^*|^2+(1+|\zeta|^2)\|g\|^2_{H^2})\leq 2(|a^*|^2+2\|g\|^2_{H^2})$. Let $C(f)=2(|a^*|^2+2\|g\|^2_{H^2})=2(|a^*|^2+2\mathcal D_{\zeta}(f))$ then for sufficiently large $k$ (and hence for sufficiently large $n$) we have $C(n,\alpha,f_k)\leq 4\alpha^2\rho^2C(f)k^5/n^2$. Taking $n=k^5$ we can write $C(n,\alpha,f_k)\leq 4\alpha^2\rho^2C(f)/n$ and so
 $$\|N^{\alpha}_n[f]-f\|_{\mathcal D_{\omega}}\leq (2\alpha\rho^{\alpha}+2\alpha\rho (C(f)C(\omega))^{1/2}+1)\frac{1}{n^{1/2}},\quad\text{for all sufficiently large}\;n.$$
\end{proof}

We close this section with an analogue result when the sequence $(p_n)_{n\in\mathbb N_0}$ is non-increasing, provided that $\liminf_{n\to+\infty}p_n>0$. This last condition is essential in getting uniform upper bound for the operator norm $\|T^{\alpha}_n\|_{\ell^2\to\ell^2}$.
In this situation, unlike for non-decreasing sequences, the assumption of finite upper growth rate is redundant, since from non-increasing property it holds that $P_n\geq (n+1)p_n$ for all $n\in\mathbb N_0$, consequently $\rho_n\leq n/(n+1)$ and in particular $\limsup_{n\to+\infty}\rho_n\leq 1$. We state the next result without proof.

\begin{thm}
\label{th:non-increasing} Let $(N, (p_n)_{n\in\mathbb N_0})$ be the N\"orlund operator with $(p_n)_{n\in\mathbb N_0}$ non-increasing and satisfying $\liminf_{n\to+\infty}p_n>0$, and let $\alpha>1/2$. Then $\lim_{n\to+\infty}\|N^{\alpha}_n[f]-f\|_{\mathcal D_{\omega}}=0$ for every $f\in\mathcal D_{\omega}$. Moreover the convergence rate is of the order $O(n^{-1/2})$.
\end{thm}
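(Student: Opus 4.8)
The plan is to mirror the proofs of Theorem \ref{th:convergence}(i) and Theorem \ref{th:quantitative}, the only genuine change being how one secures a uniform bound on the operator norm. First I note that, as already observed in the paragraph preceding the statement, a non-increasing sequence automatically satisfies $\limsup_{n\to+\infty}\rho_n\leq 1$, so it has finite upper growth rate and Lemma \ref{l:polynomials} applies verbatim: $\|N^{\alpha}_n[f]-f\|_{\mathcal D_{\omega}}\to 0$ for every polynomial $f$. Thus the whole theorem reduces to establishing $\sup_{n}\|N^{\alpha}_n\|_{\mathcal D_{\omega}\to\mathcal D_{\omega}}<+\infty$, after which both the convergence and the $O(n^{-1/2})$ rate follow by the density argument of Theorem \ref{th:convergence}(i) and the three-term splitting of Theorem \ref{th:quantitative}, respectively.

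The quantitative engine is that $\liminf_{n\to+\infty}p_n>0$ together with monotonicity forces $P_n$ to grow linearly. Since $(p_n)$ is non-increasing its limit equals its infimum, so there is a constant $c>0$ with $c\leq p_n\leq p_0$ for all $n$, and hence $(n+1)c\leq P_n\leq (n+1)p_0$. Feeding this into Lemma \ref{l:bound-T} and the Mean Value Theorem exactly as in Theorem \ref{th:convergence}(i) — writing $|P_{n-k}^{\alpha}-P_{n-k-1}^{\alpha}|=\alpha\xi_k^{\alpha-1}p_{n-k}$ with $\xi_k\in(P_{n-k-1},P_{n-k})$ and then using $p_{n-k}\leq p_0$ and $P_n\geq (n+1)c$ — I would reduce $\|T^{\alpha}_{n}\|^2_{\ell^2\to\ell^2}$ to a constant multiple of $(n+1)^{1-2\alpha}\sum_{j=1}^n j^{2\alpha-2}$. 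Since $\sum_{j=1}^n j^{2\alpha-2}$ is comparable to $n^{2\alpha-1}$ for every $\alpha>1/2$, this quantity is bounded in $n$, and Corollary \ref{c:N-T} then yields the required uniform bound $\sup_n\|N^{\alpha}_n\|_{\mathcal D_{\omega}\to\mathcal D_{\omega}}\leq M$ for some finite $M=M(\alpha,c,p_0)$.

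The main obstacle, and the sole place where the non-increasing case departs from the non-decreasing one, is precisely this norm estimate. In Theorem \ref{th:convergence}(i) the inequality $P_n-P_{n-k}\leq k\,p_n$ was available because $p_n$ is the \emph{largest} summand, and it converted directly into the growth rate $\rho_n$; for a non-increasing sequence $p_n$ is the \emph{smallest} summand, so that inequality fails and one is forced instead to extract linear growth of $P_n$ from $\liminf_{n\to+\infty}p_n>0$. This is exactly why the hypothesis $\liminf_{n\to+\infty}p_n>0$ is indispensable: without it $P_n$ could grow sublinearly and the operator norms could diverge.

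With the uniform bound in hand the remainder is routine. For convergence I would reproduce the density argument of Theorem \ref{th:convergence}(i) with $M$ in place of $\limsup_n\|N^{\alpha}_n\|$. For the rate I would follow Theorem \ref{th:quantitative}: choose polynomials $f_k$ with $\|f-f_k\|_{\mathcal D_{\omega}}<1/k^5$, split $\|N^{\alpha}_n[f]-f\|_{\mathcal D_{\omega}}$ into the three terms of \eqref{eq:triangle}, bound the first by $M/k^5$ and the last by $1/k^5$, and estimate the middle constant $C(n,\alpha,f_k)$ by $O(k^5/n^2)$ using $P_n-P_{n-j}\leq j\,p_0$ and $P_n\geq (n+1)c$ in place of the $\rho_n$-bounds, while controlling $\|f_k\|^2_{H^2}$ through Lemma \ref{l:Aleman} exactly as before. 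Taking $n=k^5$ then gives $\|N^{\alpha}_n[f]-f\|_{\mathcal D_{\omega}}=O(n^{-1/2})$.
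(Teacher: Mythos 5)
The paper states Theorem \ref{th:non-increasing} without proof, indicating only (in the preceding paragraph) that $\liminf_{n\to+\infty}p_n>0$ is what secures a uniform bound on $\|T^{\alpha}_n\|_{\ell^2\to\ell^2}$ and that finite upper growth rate is automatic; your proposal correctly executes exactly this intended plan. Your central estimate is sound: $c\leq p_n\leq p_0$ gives $(n+1)c\leq P_n\leq (n+1)p_0$, hence $\|T^{\alpha}_n\|^2_{\ell^2\to\ell^2}\leq C(\alpha,c,p_0)\,(n+1)^{1-2\alpha}\sum_{j=1}^{n}j^{2\alpha-2}=O(1)$ for every $\alpha>1/2$, after which the density argument and the $O(n^{-1/2})$ rate transfer from Theorem \ref{th:convergence}(i) and Theorem \ref{th:quantitative} exactly as you describe.
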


\begin{question}
Is $\alpha=1/2$ a sharp bound in this case? Can the convergence rate in Theorem \ref{th:quantitative} and Theorem \ref{th:non-increasing} be improved?
\end{question}

\begin{question}
Is generalized N\"orlund method monotone? In other words does $N^{\alpha}_n[f]\to f$ imply $N^{\beta}_n[f]\to f$ when $\alpha\leq \beta$?
\end{question}

\subsection*{Discussion}
The sequence $(p_n)_{n\in\mathbb N_0}$ is assumed to be monotone. In fact this condition could be relaxed to eventually monotone, that is there is some $n_0\in\mathbb N_0$ such that $(p_n)_{n\geq n_0}$ is monotone. Indeed this would not affect the analysis since both the upper and lower estimates for the operator norm $\|T^{\alpha}_n\|^2_{\ell^2\to\ell^2}$ would only differ by an additive term (sum of $P_k$--like terms up to index $n_0$ divided by $p_n^{2\alpha}$) which in the limit would be bounded by a uniform positive constant. The situation for a general sequence $(p_n)_{n\in\mathbb N_0}$, not necessarily monotone, remains also to be investigated.
\bibliographystyle{plain}
\bibliography{references}

\end{document}